\newtheorem{thm}{Theorem}[section]
\newtheorem{prop}{Proposition}[section]
\newtheorem{cor}{Corollary}[section]
\newtheorem{rem}{Remark}[section]
\numberwithin{equation}{section}
\title{\textbf{Commutation Formulae With Respect to Non-Symmetric Affine Connection}}
\author{Nenad O. Vesi\'c\footnote{This work was supported by the Serbian Ministry of Education, Science and
Technological Development through Mathematical Institute of the
Serbian Academy of Sciences and Arts}, Du\v san J. Simjanovi\'c}
\date{}
\def\maketag@@@#1{\hbox{\m@th\normalfont\normalsize#1}}
\begin{document}

  \maketitle

  \begin{abstract}
    Commutation formulae with respect to a non-symmetric affine
    connection are obtained in this paper. The components of
    commutation formulae in this paper are covariant derivatives of
    tensors with respect to symmetric and non-symmetric affine
    connection.
    \\[5pt]

    \textbf{Key words:} covariant derivative, commutation formula,
    linear independence\\[2pt]

    \textbf{$2010$ Math. Subj. Classification:} 53B05, 15A03
  \end{abstract}

  \section{Introduction}

  Identities of Ricci Type \cite{mik5,mincic2',mincic3,mincic4,mincic2,mincicnovi,mincvel,
  petrovic2019,petrovicvelimirovic2018,petrovicvelimirovic2019,sinjukov,stankoviczlatanovicvelimirovic2010,
  z4} are important for different researches in
  the fields of differential geometry and the corresponding
  applications.

  One curvature tensor of a symmetric affine
  connection space is obtained with respect to a symmetric affine
  connection \cite{mik5,sinjukov}. Many curvature tensors and
  curvature pseudotensors are founded with respect to a
  non-symmetric affine connection \cite{mincic2',mincic3,mincic4,mincic2,mincicnovi,mincvel,
  petrovic2019,petrovicvelimirovic2018,petrovicvelimirovic2019,stankoviczlatanovicvelimirovic2010,
  z4}. Curvature tensors and curvature pseudotensors are components
  of the curvature for the corresponding affine connection spaces.

  Our purpose is to obtain all identities of Ricci Type with respect to a non-symmetric affine connection in this
  paper. In this research, we will try to simplify the previously
  obtained identities.

  \subsection{Affine connection space}

  An $N$-dimensional manifold $\mathcal M_N$ equipped with an affine
  connection with torsion $\nabla$ is the generalized affine
  connection space $\mathbb{GA}_N$ \big(see \cite{eisNRG,mincic2',mincic3,mincic4,mincic2,mincicnovi,mincvel,
  petrovic2019,petrovicvelimirovic2018,petrovicvelimirovic2019,stankoviczlatanovicvelimirovic2010,
  z4}\big).

  The affine connection coefficients with respect to the affine
  connection (with torsion) $\nabla$ are $L^i_{jk}$, $L^i_{jk}\neq L^i_{kj}$. The
  symmetric and anti-symmetric parts of the affine connection
  coefficients $L^i_{jk}$ are

  \begin{equation}
  \begin{array}{ccc}
    L^i_{\underline{jk}}=\frac{1}2\big(L^i_{jk}+L^i_{kj}\big)&\mbox{and}&
    L^i_{\underset\vee{jk}}=\frac{1}2\big(L^i_{jk}-L^i_{kj}\big).
  \end{array}
  \label{eq:Lsimantisim}
  \end{equation}

  The double anti-symmetric parts $L^i_{\underset\vee{jk}}$ are the
  components of the torsion tensor for the affine connection space
  $\mathbb{GA}_N$.

  The symmetric parts $L^i_{\underline{jk}}$ satisfy the
  transformation rule

  \begin{equation}
    L^{i'}_{\underline{j'k'}}=x^{i'}_ix^j_{j'}x^k_{k'}L^i_{\underline{jk}}+
    x^{i'}_ix^i_{j'k'}.
  \end{equation}

  For this reason, the manifold $\mathcal M_N$ equipped with the
  symmetric affine connection $\overset0\nabla$ whose coefficients are
  $L^i_{\underline{jk}}$ is the associated (symmetric affine connection) space $\mathbb A_N$ of the
  space $\mathbb{GA}_N$ (see \cite{mik5,sinjukov}\big).

  Covariant derivatives are defined with respect to torsion-free
  affine connections \cite{mik5,sinjukov} and affine connections with torsion \cite{eisNRG,mincic2',mincic3,mincic4,mincic2,mincicnovi,mincvel,
  petrovic2019,petrovicvelimirovic2018,petrovicvelimirovic2019,stankoviczlatanovicvelimirovic2010,
  z4}. With
  respect to double covariant derivatives, corresponding commutation
  formulae are obtained. From the commutation formulae, the
  curvature tensors for the spaces $\mathbb A_N$ and $\mathbb{GA}_N$
  are founded.

  \subsection{About covariant derivatives}

  It exists one kind of covariant derivative with respect to the
  affine connection $\overset0\nabla$ \big(see
  \cite{mik5,sinjukov}\big)

  \begin{equation}
    a^{i_1\ldots i_p}_{j_1\ldots j_q|k}=a^{i_1\ldots i_p}_{j_1\ldots j_q,k}
    +\sum_{u=1}^p{L^{i_u}_{\underline{\alpha
    k}}a^{i_1\ldots i_{u-1}\alpha i_{u+1}\ldots i_p}
    _{j_1\ldots j_q}}-\sum_{v=1}^q{L^\alpha_{\underline{j_vk}}a^{i_1\ldots i_p}_{j_1\ldots j_{v-1}\alpha j_{v+1}\ldots j_q}},
    \label{eq:covderivativesimpq}
  \end{equation}

  \noindent for a tensor $\hat a$ of the type $(p,q)$ whose
  components are $a^{i_1\ldots i_p}_{j_1\ldots j_q}$ and the partial
  derivative $\partial/\partial x^k$ denoted by comma.

  It exists one Ricci-Type identity with respect to the covariant
  derivative given by the equation (\ref{eq:covderivativesimpq})

  \begin{equation}
    \aligned
    a^{i_1\ldots i_p}_{j_1\ldots j_q|m|n}-
    a^{i_1\ldots i_p}_{j_1\ldots j_q|n|m}&=\sum_{u=1}^p{a^{i_1\ldots
    i_{u-1}\alpha i_{u+1}\ldots i_p}_{j_1\ldots j_q}R^i_{\alpha
    mn}}-
    \sum_{v=1}^q{a^{i_1\ldots i_p}_{j_1\ldots j_{v-1}\alpha
    j_{v+1}\ldots j_q}R^\alpha_{j_vmn}},
    \endaligned\label{eq:RicciTypeIdSymm}
  \end{equation}

  \noindent for the components

  \begin{equation}
    R^i_{jmn}=L^i_{\underline{jm},n}-L^i_{\underline{jn},m}+
    L^\alpha_{\underline{jm}}L^i_{\underline{\alpha n}}-
    L^\alpha_{\underline{jn}}L^i_{\underline{\alpha m}},
    \label{eq:RAN}
  \end{equation}

  \noindent of the curvature tensor $\hat R$ of the type $(1,3)$ for
  the associated space $\mathbb A_N$.

  There are four kinds of covariant derivatives with respect to the
  affine connection with torsion $\nabla$ \big(see \cite{eisNRG,mincic2',mincic3,mincic4,mincic2,mincicnovi,mincvel,
  petrovic2019,petrovicvelimirovic2018,petrovicvelimirovic2019,stankoviczlatanovicvelimirovic2010,
  z4}\big)

  \begin{align}
    &a^{i_1\ldots i_p}_{j_1\ldots j_q\underset1|k}=
    a^{i_1\ldots i_p}_{j_1\ldots j_q,k}+\sum_{u=1}^p{L^{i_u}_{\alpha
    k}a^{i_1\ldots i_{u-1}\alpha i_{u+1}\ldots i_p}_{j_1\ldots j_q}}
    -\sum_{v=1}^q{L^\alpha_{j_vk}a^{i_1\ldots i_p}_{j_1\ldots j_{v-1}\alpha j_{v+1}\ldots
    j_q}},\label{eq:covderivativensim1pq}\\&
    a^{i_1\ldots i_p}_{j_1\ldots j_q\underset2|k}=
    a^{i_1\ldots i_p}_{j_1\ldots j_q,k}+\sum_{u=1}^p{L^{i_u}_{k\alpha}a^{i_1\ldots i_{u-1}\alpha i_{u+1}\ldots i_p}_{j_1\ldots j_q}}
    -\sum_{v=1}^q{L^\alpha_{kj_v}a^{i_1\ldots i_p}_{j_1\ldots j_{v-1}\alpha j_{v+1}\ldots
    j_q}},\label{eq:covderivativensim2pq}\\
    &a^{i_1\ldots i_p}_{j_1\ldots j_q\underset3|k}=
    a^{i_1\ldots i_p}_{j_1\ldots j_q,k}+\sum_{u=1}^p{L^{i_u}_{\alpha
    k}a^{i_1\ldots i_{u-1}\alpha i_{u+1}\ldots i_p}_{j_1\ldots j_q}}
    -\sum_{v=1}^q{L^\alpha_{kj_v}a^{i_1\ldots i_p}_{j_1\ldots j_{v-1}\alpha j_{v+1}\ldots
    j_q}},\label{eq:covderivativensim3pq}\\&
    a^{i_1\ldots i_p}_{j_1\ldots j_q\underset4|k}=
    a^{i_1\ldots i_p}_{j_1\ldots j_q,k}+\sum_{u=1}^p{L^{i_u}_{k\alpha}a^{i_1\ldots i_{u-1}\alpha i_{u+1}\ldots i_p}_{j_1\ldots j_q}}
    -\sum_{v=1}^q{L^\alpha_{j_vk}a^{i_1\ldots i_p}_{j_1\ldots j_{v-1}\alpha j_{v+1}\ldots
    j_q}}.\label{eq:covderivativensim4pq}
   \end{align}

   Let be $a^{i_1\ldots i_p}_{j_1\ldots j_q\underset0|k}\equiv a^{i_1\ldots i_p}_{j_1\ldots
   j_q|k}$. We will study the differences $a^{i_1\ldots i_p}_{j_1\ldots
   j_q\underset{v_1}|m\underset{w_1}|n}-a^{i_1\ldots i_p}_{j_1\ldots
   j_q\underset{v_2}|n\underset{w_2}|m}$,\linebreak
   $v_1,v_2,w_1,w_2\in\{0,1,2,3,4\}$, in this paper.

  \subsection{Motivation}

  It is obtained the Ricci-Type identity \cite{eisNRG,mincic2',mincic3,mincic4,mincic2,mincicnovi,mincvel,
  petrovic2019,petrovicvelimirovic2018,petrovicvelimirovic2019,stankoviczlatanovicvelimirovic2010,
  z4}

  \begin{equation}
    a^i_{j\underset1|m\underset1|n}-a^i_{j\underset1|n\underset1|m}=
    a^\alpha_j\underset1A{}^i_{\alpha
    mn}-a^i_\alpha\underset2A{}^\alpha_{jmn}+4a^i_{j<\underset\vee{mn}>}+
    4a^i_{j\leqslant\underset\vee{mn}\geqslant}+2L^\alpha_{\underset\vee{mn}}a^i_{j\underset1|\alpha},
    \label{eq:ricci1212}
  \end{equation}

  \noindent for

  \begin{align}
    &\underset1A{}^i_{jmn}=R^i_{jmn}+L^i_{\underset\vee{jm}|n}-
    L^i_{\underset\vee{jn}|m}-L^\alpha_{\underset\vee{jm}}L^i_{\underset\vee{\alpha
    n}}+L^\alpha_{\underset\vee{jn}}L^i_{\underset\vee{\alpha m}}-
    2L^\alpha_{\underline{jm}}L^i_{\underset\vee{\alpha n}}+
    L^\alpha_{\underline{jn}}L^i_{\underset\vee{\alpha
    m}},\label{eq:A1}\\
    &\underset2A{}^i_{jmn}=R^i_{jmn}+L^i_{\underset\vee{jm}|n}-
    L^i_{\underset\vee{jn}|m}-L^\alpha_{\underset\vee{jm}}L^i_{\underset\vee{\alpha
    n}}+L^\alpha_{\underset\vee{jn}}L^i_{\underset\vee{\alpha m}}-
    2L^\alpha_{\underset\vee{jm}}L^i_{\underline{\alpha n}}+
    L^\alpha_{\underline{jn}}L^i_{\underline{\alpha
    m}},\label{eq:A2}\\
    &a^i_{j<\underset\vee{mn}>}=\dfrac12L^i_{\underset\vee{\alpha
    m}}a^\alpha_{j,n}-\dfrac12L^i_{\underset\vee{\alpha
    n}}a^\alpha_{j,m}-
    \dfrac12L^\alpha_{\underset\vee{jm}}a^i_{\alpha,n}+
    \dfrac12L^\alpha_{\underset\vee{jn}}a^i_{\alpha,m},\label{eq:aij<veemn>}\\
    &a^i_{j\leqslant\underset\vee{mn}\geqslant}=
    \dfrac12a^\alpha_\beta\big(L^i_{m\beta}L^\alpha_{jn}-L^i_{n\beta}L^\alpha_{jm}-
    L^i_{\alpha m}L^\beta_{nj}+L^i_{\alpha
    n}L^\beta_{mj}\big).\label{eq:aij<=veemn=>}
  \end{align}

  The geometrical objects $\underset1A{}^i_{jmn}$ and
  $\underset2A{}^i_{jmn}$ are components of the curvature
  pseudotensors $\underset1{\hat A}$ and $\underset2{\hat A}$ of the
  type $(1,3)$. These objects are components of the curvature for
  the space $\mathbb{GA}_N$.

  In \cite{jacovder1}, and with respect to $L^i_{jk}=L^i_{\underline{jk}}+L^i_{\underset\vee{jk}}$,
   the equation (\ref{eq:ricci1212}) is simplified to

   \begin{equation}
     \aligned
     a^i_{j\underset1|m\underset2|n}-a^i_{j\underset1|n\underset2|m}&=2L^i_{\underset\vee{\alpha
     m}}a^\alpha_{j|n}-2L^i_{\underset\vee{\alpha
     n}}a^\alpha_{j|m}-2L^\alpha_{\underset\vee{jm}}a^i_{\alpha|n}+2L^\alpha_{\underset\vee{jn}}a^i_{\alpha|m}+2
     L^\alpha_{\underset\vee{mn}}a^i_{j|\alpha}\\&+
     a^\alpha_j\big(R^i_{\alpha mn}+L^i_{\underset\vee{\alpha m}|n}-
     L^i_{\underset\vee{\alpha n}|m}-
     L^\beta_{\underset\vee{\alpha m}}L^i_{\underset\vee{\beta n}}+
     L^\beta_{\underset\vee{\alpha n}}L^i_{\underset\vee{\beta m}}-
     2L^\beta_{\underset\vee{mn}}L^i_{\underset\vee{\beta\alpha}}\big)\\&
     -a^i_\alpha\big(R^\alpha_{jmn}+L^\alpha_{\underset\vee{jm}|n}-L^\alpha_{\underset\vee{jn}|m}-
     L^\beta_{\underset\vee{jm}}L^\alpha_{\underset\vee{\beta n}}+
     L^\beta_{\underset\vee{jn}}L^\alpha_{\underset\vee{\beta m}}-2
     L^\beta_{\underset\vee{mn}}L^\alpha_{\underset\vee{\beta
     j}}\big).
     \endaligned
     \tag{\ref{eq:ricci1212}'}\label{eq:ricci1212'}
   \end{equation}

  In \cite{jacovder1}, it is obtained the family of double covariant
  derivatives

  \begin{equation}
\aligned
  a^i_{j\underset v|m\underset
  w|n}&=a^i_{j|m|n}+c_vL^i_{\underset\vee{\alpha m}}a^\alpha_{j|n}+c_wL^i_{\underset\vee{\alpha n}}a^\alpha_{j|m}
  +d_vL^\alpha_{\underset\vee{jm}}a^i_{\alpha|n}+d_wL^\alpha_{\underset\vee{jn}}a^i_{\alpha|m}
  +d_wL^\alpha_{\underset\vee{mn}}a^i_{j|\alpha}\\&
  \aligned+
  a^\alpha_j\big(&c_vL^i_{\underset\vee{\alpha m}|n}+
  c_vc_wL^\beta_{\underset\vee{\alpha m}}L^i_{\underset\vee{\beta
  n}}+c_v(c_w+d_w)L^\beta_{\underset\vee{\alpha
  n}}L^i_{\underset\vee{\beta m}}
  -c_vd_wL^\beta_{\underset\vee{mn}}L^i_{\underset\vee{\beta\alpha}}\big)
  \endaligned\\&
  \aligned
  -a^i_\alpha\big(&-d_vL^\alpha_{\underset\vee{jm}|n}
  -d_v(c_w+d_w)L^\beta_{\underset\vee{jm}}L^\alpha_{\underset\vee{\beta n}}-d_vd_wL^\beta_{\underset\vee{jn}}L^\alpha_{\underset\vee{\beta
  m}}+d_vd_wL^\beta_{\underset\vee{mn}}L^\alpha_{\underset\vee{\beta
  j}}\big)
  \endaligned\\&+a^\alpha_\beta\big(c_wd_vL^\beta_{\underset\vee{jm}}L^i_{\underset\vee{\alpha
  n}}+c_vd_wL^\beta_{\underset\vee{jn}}L^i_{\underset\vee{\alpha
  m}}\big),
\endaligned\label{eq:doublecovariantderivative(1,1)}
\end{equation}

\noindent for $v,w\in\{0,1,2,3,4\}$.

When simplified the difference
$a^i_{j\underset{v_1}|m\underset{w_1}|n}-a^i_{j\underset
{v_2}|m\underset
  {w_2}|n}$, we proved the next theorem.

  \begin{thm}
    [First Ricci-Type Identities Theorem]\emph{\cite{jacovder1}}
    The family of identities of the Ricci Type with respect to a
    non-symmetric affine connection $\nabla$ is

      \begin{equation}
        \aligned
        a^i_{j\underset{v_1}|m\underset{w_1}|n}-
        a^i_{j\underset{v_2}|n\underset{w_2}|m}&=
        (c_{v_1}-c_{w_2})L^i_{\underset\vee{\alpha
        m}}a^\alpha_{j|n}+
        (c_{w_1}-c_{v_2})L^i_{\underset\vee{\alpha
        n}}a^\alpha_{j|m}+
        (d_{v_1}-d_{w_2})L^\alpha_{\underset\vee{jm}}a^i_{\alpha|n}\\&+
        (d_{w_1}-d_{v_2})L^\alpha_{\underset\vee{jn}}a^i_{\alpha|m}+
        (d_{w_1}+d_{w_2})L^\alpha_{\underset\vee{mn}}a^i_{j|\alpha}\\&
        \aligned+
  a^\alpha_j\Big\{R^i_{\alpha mn}&+c_{v_1}L^i_{\underset\vee{\alpha m}|n}-
  c_{v_2}L^i_{\underset\vee{\alpha n}|m}\\&+
  \big[c_{v_1}c_{w_1}-c_{v_2}(c_{w_2}+d_{w_2})\big]L^\beta_{\underset\vee{\alpha m}}L^i_{\underset\vee{\beta
  n}}\\&+\big[c_{v_1}(c_{w_1}+d_{w_1})-c_{v_2}c_{w_2}\big]L^\beta_{\underset\vee{\alpha n}}L^i_{\underset\vee{\beta
  m}}\\&
  -(c_{v_1}d_{w_1}+c_{v_2}d_{w_2})L^\beta_{\underset\vee{mn}}L^i_{\underset\vee{\beta\alpha}}\Big\}\endaligned\\&
        \aligned
  -a^i_\alpha\Big\{R^\alpha_{jmn}&-d_{v_1}L^\alpha_{\underset\vee{jm}|n}+
  d_{v_2}L^\alpha_{\underset\vee{jn}|m}
  \\&-\big[d_{v_1}(c_{w_1}+d_{w_1})-d_{v_2}d_{w_2}\big]L^\beta_{\underset\vee{jm}}L^\alpha_{\underset\vee{\beta n}}
  \\&-\big[d_{v_1}d_{w_1}-d_{v_2}(c_{w_2}+d_{w_2})\big]L^\beta_{\underset\vee{jn}}L^\alpha_{\underset\vee{\beta
  m}}\\&+(d_{v_1}d_{w_1}+d_{v_2}d_{w_2})L^\beta_{\underset\vee{mn}}L^\alpha_{\underset\vee{\beta
  j}}\Big\}\endaligned\\&
        +a^\alpha_\beta\big\{(c_{w_1}d_{v_1}-c_{v_2}d_{w_2})L^\beta_{\underset\vee{jm}}L^i_{\underset\vee{\alpha
        n}}+(c_{v_1}d_{w_1}-c_{w_2}d_{v_2})L^\beta_{\underset\vee{jn}}L^i_{\underset\vee{\alpha
        m}}\big\},
        \endaligned\label{eq:RicciTypeIDSfamily}
      \end{equation}

    \noindent for $v_1,v_2,w_1,w_2\in\{0,1,2,3,4\}$.\quad\qed
  \end{thm}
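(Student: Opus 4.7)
The plan is to apply formula \eqref{eq:doublecovariantderivative(1,1)} to each of the two terms on the left-hand side of \eqref{eq:RicciTypeIDSfamily} and then use the symmetric Ricci identity \eqref{eq:RicciTypeIdSymm} to eliminate the ``ambient'' symmetric double derivative. Concretely, I would write
\[
  a^i_{j\underset{v_1}|m\underset{w_1}|n}=a^i_{j|m|n}+\Phi(v_1,w_1;m,n),\qquad
  a^i_{j\underset{v_2}|n\underset{w_2}|m}=a^i_{j|n|m}+\Phi(v_2,w_2;n,m),
\]
where $\Phi$ is the explicit correction in \eqref{eq:doublecovariantderivative(1,1)}. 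Subtracting, the residual symmetric part $a^i_{j|m|n}-a^i_{j|n|m}$ is evaluated by \eqref{eq:RicciTypeIdSymm} for a type $(1,1)$ tensor, giving precisely $a^\alpha_j R^i_{\alpha mn}-a^i_\alpha R^\alpha_{jmn}$. This produces the two ``bare'' curvature contributions $R^i_{\alpha mn}$ and $R^\alpha_{jmn}$ that sit inside the braces of \eqref{eq:RicciTypeIDSfamily}.

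Next I would collect the first-order torsion terms. In $\Phi(v_2,w_2;n,m)$ every index pair $(m,n)$ is swapped; since $L^\alpha_{\underset\vee{nm}}=-L^\alpha_{\underset\vee{mn}}$, the single term $d_w L^\alpha_{\underset\vee{mn}}a^i_{j|\alpha}$ contributes with a sign flip, giving the coefficient $(d_{w_1}+d_{w_2})$, while the terms with $L^i_{\underset\vee{\alpha m}}a^\alpha_{j|n}$, $L^i_{\underset\vee{\alpha n}}a^\alpha_{j|m}$, $L^\alpha_{\underset\vee{jm}}a^i_{\alpha|n}$, $L^\alpha_{\underset\vee{jn}}a^i_{\alpha|m}$ receive their ``direct minus swapped'' coefficients $(c_{v_1}-c_{w_2})$, $(c_{w_1}-c_{v_2})$, $(d_{v_1}-d_{w_2})$, $(d_{w_1}-d_{v_2})$ respectively. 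This matches the first two lines of \eqref{eq:RicciTypeIDSfamily} exactly.

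Finally, I would match the quadratic-in-$L$ blocks attached to $a^\alpha_j$, $a^i_\alpha$ and $a^\alpha_\beta$. Each $L\cdot L$ product in $\Phi(v_1,w_1;m,n)$ pairs, after the $m\leftrightarrow n$ swap, with a (possibly sign-flipped) counterpart in $\Phi(v_2,w_2;n,m)$; a relabelling of the dummy indices $\alpha,\beta$ is then used to align the resulting monomials with the four canonical shapes $L^\beta_{\underset\vee{\alpha m}}L^i_{\underset\vee{\beta n}}$, $L^\beta_{\underset\vee{\alpha n}}L^i_{\underset\vee{\beta m}}$, $L^\beta_{\underset\vee{mn}}L^i_{\underset\vee{\beta\alpha}}$, and their analogues with $j$ in place of $\alpha$, so that the coefficients coalesce into the displayed expressions such as $c_{v_1}c_{w_1}-c_{v_2}(c_{w_2}+d_{w_2})$. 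I expect this bookkeeping to be the only genuine obstacle: the argument is purely algebraic and uses no new ingredients beyond \eqref{eq:RicciTypeIdSymm} and \eqref{eq:doublecovariantderivative(1,1)}, but roughly twenty quadratic terms must each be routed to the correct slot, with careful attention to which of them acquire a sign from the antisymmetry of $L^\alpha_{\underset\vee{mn}}$. Once this matching is done for all $v_1,v_2,w_1,w_2\in\{0,1,2,3,4\}$, the identity \eqref{eq:RicciTypeIDSfamily} follows.
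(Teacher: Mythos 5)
Your proposal is correct and follows essentially the same route as the paper: the theorem is exactly the difference $a^i_{j\underset{v_1}|m\underset{w_1}|n}-a^i_{j\underset{v_2}|n\underset{w_2}|m}$ obtained by applying the family formula \eqref{eq:doublecovariantderivative(1,1)} to both terms (with $m\leftrightarrow n$ and $(v,w)\to(v_2,w_2)$ in the second), using the symmetric Ricci identity \eqref{eq:RicciTypeIdSymm} for the residual $a^i_{j|m|n}-a^i_{j|n|m}$, and collecting the torsion terms with the sign flips coming from $L^\alpha_{\underset\vee{nm}}=-L^\alpha_{\underset\vee{mn}}$. The bookkeeping you describe does produce precisely the displayed coefficients, so no gap remains.
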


  It is obtained \cite{jacovder1} that the geometrical objects
  $a^i_{j\underset1|k}$, $a^i_{j\underset2|k}$,
  $a^i_{j\underset3|k}$ are linearly independent and that the
  geometrical objects $a^i_{j|k}$ and $a^i_{j\underset4|k}$ may be
  uniquely expressed in the terms of the first three kinds of
  covariant derivative.

  The purpose of this paper is to generalize the First Ricci-Type
  Identities Theorem in the sense of changing the summands
  $L^i_{\underset\vee{jk}}a^l_{s|r}$ with linear combinations of the
  geometrical objects $L^i_{\underset\vee{jk}}a^l_{s|r}\equiv
  L^i_{\underset\vee{jk}}a^l_{s\underset0|r}$,
  $L^i_{\underset\vee{jk}}a^l_{s\underset1|r}$,
  $L^i_{\underset\vee{jk}}a^l_{s\underset2|r}$,
  $L^i_{\underset\vee{jk}}a^l_{s\underset3|r}$,
  $L^i_{\underset\vee{jk}}a^l_{s\underset4|r}$.

  At the start of the research, we will
  prove that three of covariant derivatives $a^i_{j|k}$,
  $a^i_{j\underset1|k}$, $a^i_{j\underset2|k}$,
  $a^i_{j\underset3|k}$, $a^i_{j\underset4|k}$ are enough for all
  commutation formulae to be obtained.

  The next
  result of our research will be the commutation formulae with respect
  to double covariant derivatives of a tensor $\hat a$ of a type
  $(p,q)$, $p,q\in\mathbb N$.


  \section{Four plus one kinds of covariant derivatives}

  For the research in this paper, we need the next
  propositions.

  \begin{prop}
    The covariant derivatives given by the equations
    \emph{(\ref{eq:covderivativensim1pq},
    \ref{eq:covderivativensim2pq}, \ref{eq:covderivativensim3pq},
    \ref{eq:covderivativensim4pq})} and the covariant derivative
    given by the equation \emph{(\ref{eq:covderivativesimpq})}
    satisfy the equations

    \begin{align}
    &a^{i_1\ldots i_p}_{j_1\ldots j_q\underset1|k}=
    a^{i_1\ldots i_p}_{j_1\ldots j_q|k}+\sum_{u=1}^p{L^{i_u}_{\underset\vee{\alpha
    k}}a^{i_1\ldots i_{u-1}\alpha i_{u+1}\ldots i_p}_{j_1\ldots j_q}}
    -\sum_{v=1}^q{L^\alpha_{\underset\vee{j_vk}}a^{i_1\ldots i_p}_{j_1\ldots j_{v-1}\alpha j_{v+1}\ldots
    j_q}},\label{eq:ccovderivativensim1pq}\\&
    a^{i_1\ldots i_p}_{j_1\ldots j_q\underset2|k}=
    a^{i_1\ldots i_p}_{j_1\ldots j_q|k}-\sum_{u=1}^p{L^{i_u}_{\underset\vee{\alpha k}}a^{i_1\ldots i_{u-1}\alpha i_{u+1}\ldots i_p}_{j_1\ldots j_q}}
    +\sum_{v=1}^q{L^\alpha_{\underset\vee{j_vk}}a^{i_1\ldots i_p}_{j_1\ldots j_{v-1}\alpha j_{v+1}\ldots
    j_q}},\label{eq:ccovderivativensim2pq}\\
    &a^{i_1\ldots i_p}_{j_1\ldots j_q\underset3|k}=
    a^{i_1\ldots i_p}_{j_1\ldots j_q|k}+\sum_{u=1}^p{L^{i_u}_{\underset\vee{\alpha
    k}}a^{i_1\ldots i_{u-1}\alpha i_{u+1}\ldots i_p}_{j_1\ldots j_q}}
    +\sum_{v=1}^q{L^\alpha_{\underset\vee{j_vk}}a^{i_1\ldots i_p}_{j_1\ldots j_{v-1}\alpha j_{v+1}\ldots
    j_q}},\label{eq:ccovderivativensim3pq}\\&
    a^{i_1\ldots i_p}_{j_1\ldots j_q\underset4|k}=
    a^{i_1\ldots i_p}_{j_1\ldots j_q|k}-\sum_{u=1}^p{L^{i_u}_{\underset\vee{\alpha k}}a^{i_1\ldots i_{u-1}\alpha i_{u+1}\ldots i_p}_{j_1\ldots j_q}}
    -\sum_{v=1}^q{L^\alpha_{\underset\vee{j_vk}}a^{i_1\ldots i_p}_{j_1\ldots j_{v-1}\alpha j_{v+1}\ldots
    j_q}}.\label{eq:ccovderivativensim4pq}
   \end{align}
  \end{prop}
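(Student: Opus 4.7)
The plan is to prove all four identities by direct substitution based on the decomposition (\ref{eq:Lsimantisim}) of the connection coefficients. Since swapping the two lower indices leaves the symmetric part fixed and reverses the sign of the anti-symmetric part, one has
\[
L^i_{jk}=L^i_{\underline{jk}}+L^i_{\underset\vee{jk}},\qquad L^i_{kj}=L^i_{\underline{jk}}-L^i_{\underset\vee{jk}}.
\]

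First I would apply this to the definition (\ref{eq:covderivativensim1pq}) of $a^{i_1\ldots i_p}_{j_1\ldots j_q\underset1|k}$. Substituting both decompositions into the upper- and lower-index contractions, the symmetric pieces $L^{i_u}_{\underline{\alpha k}}$ and $L^\alpha_{\underline{j_vk}}$ combine with $a^{i_1\ldots i_p}_{j_1\ldots j_q,k}$ to reproduce, term-for-term, the right-hand side of (\ref{eq:covderivativesimpq}); their sum is precisely $a^{i_1\ldots i_p}_{j_1\ldots j_q|k}$. The anti-symmetric pieces remain as correction sums with the $+/-$ signs inherited from the defining formula, which is exactly (\ref{eq:ccovderivativensim1pq}).

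The three remaining identities follow in the same way, the only difference being which of the two contractions involves the reversed ordering $L^{i_u}_{k\alpha}$ or $L^\alpha_{kj_v}$ and therefore contributes an extra sign flip on its anti-symmetric part. In (\ref{eq:covderivativensim2pq}) both contractions are reversed, producing the $-/+$ pattern of (\ref{eq:ccovderivativensim2pq}); in (\ref{eq:covderivativensim3pq}) only the lower contraction is reversed, producing the $+/+$ pattern of (\ref{eq:ccovderivativensim3pq}); and in (\ref{eq:covderivativensim4pq}) only the upper contraction is reversed, producing the $-/-$ pattern of (\ref{eq:ccovderivativensim4pq}).

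The computation is purely algebraic and presents no genuine obstacle: the whole argument reduces to bookkeeping on the four possible sign patterns $(\pm,\pm)$ on the correction sums, together with the observation that the symmetric contributions independently reassemble $a^{i_1\ldots i_p}_{j_1\ldots j_q|k}$ in each of the four cases. The only point requiring mild care is to ensure that, when the two decompositions are substituted simultaneously, one does not miscount a sign on the $L^\alpha_{\underset\vee{j_vk}}$ term in kinds 2 and 4, where the reversal of the lower contraction interacts with the minus sign already present in front of the lower sum in the defining formulas.
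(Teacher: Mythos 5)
Your proof is correct: substituting $L^i_{jk}=L^i_{\underline{jk}}+L^i_{\underset\vee{jk}}$ and $L^i_{kj}=L^i_{\underline{jk}}-L^i_{\underset\vee{jk}}$ into each of the four defining formulas does reassemble $a^{i_1\ldots i_p}_{j_1\ldots j_q|k}$ from the symmetric parts and leaves exactly the claimed $(\pm,\pm)$ sign patterns on the torsion sums, and your case-by-case accounting (no reversal, both reversed, lower only, upper only) checks out. The paper omits the proof entirely, treating the proposition as an immediate consequence of the decomposition (\ref{eq:Lsimantisim}), so your argument is precisely the routine verification the authors had in mind.
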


  \begin{rem}\label{remcovderivative}
    With respect to the equations
    \emph{(\ref{eq:covderivativesimpq},
    \ref{eq:ccovderivativensim1pq}--\ref{eq:ccovderivativensim4pq})},
    we obtain

    \begin{equation}
      a^{i_1\ldots i_p}_{j_1\ldots j_q\underset z|k}=
    a^{i_1\ldots i_p}_{j_1\ldots j_q|k}+c_z\sum_{u=1}^p{L^{i_u}_{\underset\vee{\alpha k}}a^{i_1\ldots i_{u-1}\alpha i_{u+1}\ldots i_p}_{j_1\ldots j_q}}
    +d_z\sum_{v=1}^q{L^\alpha_{\underset\vee{j_vk}}a^{i_1\ldots i_p}_{j_1\ldots j_{v-1}\alpha j_{v+1}\ldots
    j_q}},\label{eq:a=a+cz+dz}
    \end{equation}

    \noindent for $z=0,\ldots,4$,
    and the corresponding coefficients
    $c_0=d_0=0$, $c_1=1$, $c_2=-1$, $c_3=1$, $c_4=-1$, $d_1=-1$,
    $d_2=1$, $d_3=1$, $d_4=-1$.
  \end{rem}

  Let us obtain the commutation formulae with respect to covariant
  derivatives of tensors $\hat a$ of the type $(1,1)$, $\hat u$ of the type $(1,0)$ and
  $\hat v$ of the type $(0,1)$.

  \begin{prop}
    For a tensor $\hat a$ of the type $(1,1)$, three of the geometrical objects $a^i_{j\underset0|k}$,
    $a^i_{j\underset1|k}$, $a^i_{j\underset2|k}$,
    $a^i_{j\underset3|k}$, $a^i_{j\underset4|k}$ are linearly
    independent.
  \end{prop}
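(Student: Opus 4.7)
The plan is to use Remark \ref{remcovderivative} to reduce the linear-independence claim to a small linear-algebra computation. By (\ref{eq:a=a+cz+dz}), each of the five geometrical objects admits the uniform decomposition
\begin{equation*}
a^i_{j\underset z|k} = P^i_{jk} + c_z\, Q^i_{jk} + d_z\, S^i_{jk}, \qquad z = 0,1,2,3,4,
\end{equation*}
where $P^i_{jk}:=a^i_{j|k}$, $Q^i_{jk}:=L^i_{\underset\vee{\alpha k}}a^\alpha_j$, $S^i_{jk}:=L^\alpha_{\underset\vee{jk}}a^i_\alpha$, and the coefficient vector $(1,c_z,d_z)\in\mathbb{R}^3$ takes the five prescribed values $(1,0,0)$, $(1,1,-1)$, $(1,-1,1)$, $(1,1,1)$, $(1,-1,-1)$.

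First I would establish that the three atomic objects $P$, $Q$, $S$ are themselves linearly independent as geometrical objects. Only $P^i_{jk}$ contains the partial derivative $a^i_{j,k}$, so the coefficient of $P$ in any purported trivial relation $\alpha P + \beta Q + \gamma S \equiv 0$ must vanish. The residual identity $\beta\, L^i_{\underset\vee{\alpha k}}a^\alpha_j + \gamma\, L^\alpha_{\underset\vee{jk}}a^i_\alpha \equiv 0$ is then killed by prescribing $a^i_j$ and the torsion components $L^i_{\underset\vee{jk}}$ at a point so that exactly one of the two expressions is nonzero on a chosen component; this yields $\beta=\gamma=0$.

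Once $P,Q,S$ are known to be independent, the linear independence of any three $a^i_{j\underset z|k}$ is equivalent to the linear independence of the three corresponding rows of the $5\times 3$ coefficient matrix in $\mathbb{R}^3$. It suffices therefore to exhibit one triple whose $3\times 3$ minor is nonsingular. Taking $\{z_1,z_2,z_3\}=\{1,2,3\}$ yields
\begin{equation*}
\begin{pmatrix} 1 & 1 & -1 \\ 1 & -1 & 1 \\ 1 & 1 & 1 \end{pmatrix},
\end{equation*}
whose determinant is $-4\neq 0$, so $a^i_{j\underset 1|k},\, a^i_{j\underset 2|k},\, a^i_{j\underset 3|k}$ are linearly independent, in agreement with \cite{jacovder1}. (As a by-product, the relations $v_1+v_2=2v_0$ and $v_3+v_4=2v_0$ between the coefficient vectors expose the two linear dependencies that prevent four of the five objects from being independent, but this is not needed for the stated proposition.)

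The main obstacle is the first step: one must be precise about what ``linearly independent geometrical objects'' means and justify that no nontrivial scalar identity can relate $P$, $Q$, $S$. Since the torsion $L^i_{\underset\vee{jk}}$ and the tensor $a^i_j$ are independent data that can be prescribed arbitrarily at a point, this reduces to a purely pointwise linear-algebra check; no genuine difficulty is expected, provided one is careful to distinguish identities that are forced by the tensorial structure from identities that hold only in a particular chart.
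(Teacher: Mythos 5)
Your proposal is correct and follows essentially the same route as the paper: both reduce the claim to the rank of the $5\times3$ coefficient matrix with rows $(1,c_z,d_z)$ coming from equation (\ref{eq:a=a+cz+dz}), and your nonsingular minor for $z\in\{1,2,3\}$ is just an explicit witness that this rank equals $3$. The only difference is that you additionally justify the linear independence of the three building blocks $a^i_{j|k}$, $L^i_{\underset\vee{\alpha k}}a^\alpha_j$, $L^\alpha_{\underset\vee{jk}}a^i_\alpha$, a step the paper leaves implicit.
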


  \begin{proof}
    With respect to the equation (\ref{eq:a=a+cz+dz}), the number of
    linearly independent geometrical objects $a^i_{j\underset0|k}$,
    $a^i_{j\underset1|k}$, $a^i_{j\underset2|k}$,
    $a^i_{j\underset3|k}$, $a^i_{j\underset4|k}$ is equal to the
    rank of the matrix

    \begin{equation*}
      M=\left[
      \begin{array}{ccc}
        1&0&0\\
        1&1&-1\\
        1&-1&1\\
        1&1&1\\
        1&-1&-1
      \end{array}
      \right].
    \end{equation*}

    Because $Rank(M)=3$, three of the geometrical objects $a^i_{j\underset0|k}$,
    $a^i_{j\underset1|k}$, $a^i_{j\underset2|k}$,
    $a^i_{j\underset3|k}$, $a^i_{j\underset4|k}$ are linearly
    independent.
  \end{proof}

  \begin{cor}
    For a tensor $\hat u$ of the type $(1,0)$,
    two of the geometrical objects $u^i_{\underset0|k}$,
    $u^i_{\underset 1|k}\equiv u^i_{\underset3|k}$,
    $u^i_{\underset2| k}\equiv u^i_{\underset4|k}$ are linearly
    independent.

    For a tensor $\hat v$ of the type $(0,1)$, two of the
    geometrical objects $v_{j\underset0|k}$, $v_{j\underset1|k}\equiv
    v_{i\underset4|k}$, $v_{j\underset2|k}\equiv v_{j\underset3|k}$
    are linearly independent. \quad\qed
  \end{cor}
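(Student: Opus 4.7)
My plan is to derive both statements directly from the specialization of Remark \ref{remcovderivative} to the reduced tensor types, mimicking the matrix-rank argument used in the previous proposition.

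First I would specialize equation (\ref{eq:a=a+cz+dz}) to a tensor $\hat u$ of type $(1,0)$. With $q=0$ the lower-index sum disappears, leaving
\begin{equation*}
u^i_{\underset z|k}=u^i_{\underset 0|k}+c_z L^i_{\underset\vee{\alpha k}}u^\alpha,\qquad z=0,1,2,3,4.
\end{equation*}
Reading off the coefficients $(1,c_z)$ for $z=0,\ldots,4$ from the list in Remark \ref{remcovderivative} gives the five rows $(1,0)$, $(1,1)$, $(1,-1)$, $(1,1)$, $(1,-1)$. Rows two and four coincide and rows three and five coincide, which immediately yields the identifications $u^i_{\underset 1|k}\equiv u^i_{\underset 3|k}$ and $u^i_{\underset 2|k}\equiv u^i_{\underset 4|k}$. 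The resulting matrix has three distinct rows $(1,0),(1,1),(1,-1)$ whose rank is $2$, so exactly two of the geometrical objects $u^i_{\underset 0|k},u^i_{\underset 1|k},u^i_{\underset 2|k}$ are linearly independent.

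Next I would repeat the argument for a tensor $\hat v$ of type $(0,1)$. Now $p=0$, and (\ref{eq:a=a+cz+dz}) reduces to
\begin{equation*}
v_{j\underset z|k}=v_{j\underset 0|k}+d_z L^\alpha_{\underset\vee{jk}}v_\alpha,\qquad z=0,1,2,3,4.
\end{equation*}
Using $d_0=0,\ d_1=-1,\ d_2=1,\ d_3=1,\ d_4=-1$ one sees that $v_{j\underset 1|k}\equiv v_{j\underset 4|k}$ and $v_{j\underset 2|k}\equiv v_{j\underset 3|k}$, and the coefficient matrix again has rank $2$, giving the claim.

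There is no real obstacle; the only subtlety is making sure the coefficient list from Remark \ref{remcovderivative} is read off correctly, since the two collapses $c_1=c_3$, $c_2=c_4$ (for the contravariant case) and $d_1=d_4$, $d_2=d_3$ (for the covariant case) produce \emph{different} pairings in the two cases, and this asymmetry is exactly the content of the corollary. I would therefore present the two cases in parallel and close with one sentence stating the appropriate rank of the $3\times 2$ reduced matrix.
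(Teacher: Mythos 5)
Your proposal is correct and follows exactly the route the paper intends: the corollary is stated with no written proof precisely because it is the specialization of the preceding proposition's rank argument to the $(1,0)$ and $(0,1)$ cases, with the coefficient matrix reducing to the rows $(1,c_z)$ resp.\ $(1,d_z)$ of rank $2$. Your identification of the collapses $u^i_{\underset1|k}\equiv u^i_{\underset3|k}$, $u^i_{\underset2|k}\equiv u^i_{\underset4|k}$ and $v_{j\underset1|k}\equiv v_{j\underset4|k}$, $v_{j\underset2|k}\equiv v_{j\underset3|k}$ agrees with the statement, so nothing is missing.
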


  \begin{cor}\label{cor:LinIndependentCovDer}
    The triples

    \begin{equation}
      \begin{array}{llll}
        \underset1{\mathcal A}:\left\{
        \begin{array}{l}
          a^i_{j\underset1|k},\\
          a^i_{j\underset2|k},\\
          a^i_{j\underset3|k},
        \end{array}
        \right.&\underset2{\mathcal A}:\left\{
        \begin{array}{l}
          a^i_{j\underset1|k},\\
          a^i_{j\underset2|k},\\
          a^i_{j\underset4|k},
        \end{array}
        \right.&\underset3{\mathcal A}:\left\{
        \begin{array}{l}
          a^i_{j\underset1|k},\\
          a^i_{j\underset3|k},\\
          a^i_{j\underset4|k},
        \end{array}
        \right.&\underset4{\mathcal A}:\left\{
        \begin{array}{l}
          a^i_{j\underset2|k},\\
          a^i_{j\underset3|k},\\
          a^i_{j\underset4|k},
        \end{array}
        \right.\\\\
        \underset5{\mathcal A}:\left\{
        \begin{array}{l}
          a^i_{j\underset0|k},\\
          a^i_{j\underset1|k},\\
          a^i_{j\underset3|k},
        \end{array}
        \right.&\underset6{\mathcal A}:\left\{
        \begin{array}{l}
          a^i_{j\underset0|k},\\
          a^i_{j\underset1|k},\\
          a^i_{j\underset4|k},
        \end{array}
        \right.&\underset7{\mathcal A}:\left\{
        \begin{array}{l}
          a^i_{j\underset0|k},\\
          a^i_{j\underset2|k},\\
          a^i_{j\underset3|k},
        \end{array}
        \right.&\underset8{\mathcal A}:\left\{
        \begin{array}{l}
          a^i_{j\underset0|k},\\
          a^i_{j\underset2|k},\\
          a^i_{j\underset4|k},
        \end{array}
        \right.
      \end{array}\label{eq:linindcovderA}
    \end{equation}

    \noindent are triples of linearly independent geometrical
    objects $a^i_{j\underset z|k}$, $z=0,\ldots,4$.

    The pairs

    \begin{footnotesize}
    \begin{equation}
      \begin{array}{ccccccc}
        \underset1{\mathcal U}:
        \left\{
        \begin{array}{l}
          u^i_{\underset0|k},\\
          u^i_{\underset1|k},
        \end{array}
        \right.&
        \underset2{\mathcal U}:
        \left\{
        \begin{array}{l}
          u^i_{\underset0|k},\\
          u^i_{\underset2|k},
        \end{array}
        \right.&
        \underset3{\mathcal U}:
        \left\{
        \begin{array}{l}
          u^i_{\underset1|k},\\
          u^i_{\underset2|k},
        \end{array}
        \right.&\mbox{\normalsize and}&
        \underset1{\mathcal V}:
        \left\{
        \begin{array}{l}
          v_{j\underset0|k},\\
          v_{j\underset1|k},
        \end{array}
        \right.&
        \underset2{\mathcal V}:
        \left\{
        \begin{array}{l}
          v_{j\underset0|k},\\
          v_{j\underset2|k},
        \end{array}
        \right.&
        \underset3{\mathcal V}:
        \left\{
        \begin{array}{l}
          v_{j\underset1|k},\\
          v_{j\underset2|k},
        \end{array}
        \right.
      \end{array}
    \end{equation}
    \end{footnotesize}

    \noindent are pairs of linearly independent geometrical objects
    $u^i_{\underset z|k}$, $v_{j\underset z|k}$, for
    $z=0,\ldots,4$.\quad\qed
  \end{cor}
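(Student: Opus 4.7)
The plan is to reduce linear independence of each listed family directly to a determinant condition via the normal form (\ref{eq:a=a+cz+dz}) from Remark \ref{remcovderivative}.

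First I would fix a tensor $\hat a$ of type $(1,1)$ and treat the three geometrical objects $a^i_{j|k}$, $L^{i}_{\underset\vee{\alpha k}}a^\alpha_j$ and $L^\alpha_{\underset\vee{jk}}a^i_\alpha$ as a putative basis: by (\ref{eq:a=a+cz+dz}) each $a^i_{j\underset z|k}$ equals a linear combination of these three with coefficient vector $(1,c_z,d_z)$. Consequently, any triple $\{a^i_{j\underset{z_1}|k},a^i_{j\underset{z_2}|k},a^i_{j\underset{z_3}|k}\}$ is linearly independent exactly when the $3\times3$ matrix whose rows are $(1,c_{z_r},d_{z_r})$, $r=1,2,3$, has non-zero determinant, i.e. rank $3$. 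This is precisely the criterion already used for the proposition immediately preceding the corollary.

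Next I would tabulate, for each of the eight indexings $\underset1{\mathcal A},\ldots,\underset8{\mathcal A}$, the corresponding row-triple drawn from the list $(1,0,0)$, $(1,1,-1)$, $(1,-1,1)$, $(1,1,1)$, $(1,-1,-1)$ (for $z=0,1,2,3,4$ respectively), and compute the $3\times3$ determinant in each case. A direct expansion gives values such as $-4$ for $\underset1{\mathcal A}$ and $\pm4$ for the other seven, so each matrix has rank $3$ and the associated triple is linearly independent.

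For the tensors $\hat u$ of type $(1,0)$ and $\hat v$ of type $(0,1)$, specialize (\ref{eq:a=a+cz+dz}) by letting $q=0$ and $p=0$, respectively. For $\hat u$ only the $c_z$-term survives, so $u^i_{\underset z|k}=u^i_{|k}+c_zL^i_{\underset\vee{\alpha k}}u^\alpha$; similarly $v_{j\underset z|k}=v_{j|k}+d_zL^\alpha_{\underset\vee{jk}}v_\alpha$. Linear independence of a pair thus reduces to the $2\times2$ determinant
\[
\det\begin{pmatrix}1 & c_{z_1}\\ 1 & c_{z_2}\end{pmatrix}=c_{z_2}-c_{z_1}\neq 0
\]
(resp.\ with $d_{z_r}$). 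The three listed pairs of $c$-values $\{(0,1),(0,-1),(1,-1)\}$ and the three listed pairs of $d$-values are all composed of distinct entries, so each pair is independent; the coincidences $u^i_{\underset1|k}\equiv u^i_{\underset3|k}$, $u^i_{\underset2|k}\equiv u^i_{\underset4|k}$, $v_{j\underset1|k}\equiv v_{j\underset4|k}$, $v_{j\underset2|k}\equiv v_{j\underset3|k}$ are equivalently the equalities $c_1=c_3$, $c_2=c_4$, $d_1=d_4$, $d_2=d_3$ and correctly account for why only three, not five, choices appear.

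The computation is entirely routine, and there is no genuine obstacle; the only care required is bookkeeping of the signs $c_z,d_z$ so that all eight $3\times3$ and six $2\times2$ determinants are evaluated without transcription error. The proposition preceding this corollary already certifies the underlying framework (a triple is independent iff the coefficient matrix has rank $3$), and the corollary is simply the catalogue of all index-triples and index-pairs for which this rank condition holds.
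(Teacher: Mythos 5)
Your proposal is correct and follows essentially the same route as the paper: the corollary is exactly the catalogue of index-triples and index-pairs for which the coefficient matrix with rows $(1,c_z,d_z)$ (extracted from the relation (\ref{eq:a=a+cz+dz})) has full rank, which is the rank argument already used in the proposition preceding it, specialized to $q=0$ and $p=0$ for the $(1,0)$ and $(0,1)$ cases. One trivial slip: the four triples containing $a^i_{j\underset0|k}$ have determinant $\pm2$, not $\pm4$, but all eight determinants are indeed non-zero (and the two omitted triples $\{0,1,2\}$ and $\{0,3,4\}$ give determinant $0$), so the conclusion stands.
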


  \section{Identities of Ricci Type with respect to tensor $\hat a$ of type $(1,1)$}

  Let us generalize the First Ricci-Type Identities Theorem.

  \begin{thm}[Second Ricci-Type Identities Theorem]
    Let be

    \begin{align}
      &X^i_{jk}=\rho^1_0a^i_{j|k}+\rho^1_1a^i_{j\underset1|k}+
      \rho^1_2a^i_{j\underset2|k}+\rho^1_3a^i_{j\underset3|k}+
      \rho^1_4a^i_{j\underset4|k},\label{eq:Xsecondthm}\\\displaybreak[0]
    &Y^i_{jk}=\rho^2_0a^i_{j|k}+\rho^2_1a^i_{j\underset1|k}+
      \rho^2_2a^i_{j\underset2|k}+\rho^2_3a^i_{j\underset3|k}+
      \rho^2_4a^i_{j\underset4|k},\label{eq:Ysecondthm}\\\displaybreak[0]
      &Z^i_{jk}=\rho^3_0a^i_{j|k}+\rho^3_1a^i_{j\underset1|k}+
      \rho^3_2a^i_{j\underset2|k}+\rho^3_3a^i_{j\underset3|k}+
      \rho^3_4a^i_{j\underset4|k},\label{eq:Zsecondthm}\\\displaybreak[0]
      &U^i_{jk}=\rho^4_0a^i_{j|k}+\rho^4_1a^i_{j\underset1|k}+
      \rho^4_2a^i_{j\underset2|k}+\rho^4_3A^i_{j\underset3|k}+
      \rho^4_4a^i_{j\underset4|k},\label{eq:Usecondthm}\\\displaybreak[0]
      &V^i_{jk}=\rho^5_0a^i_{j|k}+\rho^5_1a^i_{j\underset1|k}+
      \rho^5_2a^i_{j\underset2|k}+\rho^5_3a^i_{j\underset3|k}+
      \rho^5_4a^i_{j\underset4|k},\label{eq:Vsecondthm}
    \end{align}

    \noindent for a tensor $\hat a$ of the type $(1,1)$ and
     scalars
    $\rho^z_0,\rho^z_1,\rho^z_2,\rho^z_3,\rho^z_4$,
    $z\in\{1,\ldots,5\}$,
    $\rho^z_0+\rho^z_1+\rho^z_2+\rho^z_3+\rho^z_4=1$.

    It holds the equation

    \begin{equation}
      \aligned
      a^i_{j\underset{v_1}|m\underset{w_1}|n}-
      a^i_{j\underset{v_2}|n\underset{w_2}|m}&=(c_{v_1}-c_{w_2})L^i_{\underset\vee{\alpha
      m}}X^\alpha_{jn}+(c_{w_1}-c_{v_2})L^i_{\underset\vee{\alpha
      n}}Y^\alpha_{jm}+(d_{v_1}-d_{w_2})L^\alpha_{\underset\vee{jm}}Z^i_{\alpha
      n}\\&+(d_{w_1}-d_{v_2})L^\alpha_{\underset\vee{jn}}U^i_{\alpha
      m}+(d_{w_1}+d_{w_2})L^\alpha_{\underset\vee{mn}}V^i_{j\alpha}\\&
      \aligned+a^\alpha_j\big\{R^i_{\alpha
      mn}&+c_{v_1}L^i_{\underset\vee{\alpha
      m}|n}-c_{v_2}L^i_{\underset\vee{\alpha
      n}|m}\\&+p_1L^\beta_{\underset\vee{\alpha
      m}}L^i_{\underset\vee{\beta n}}
      +p_2L^\beta_{\underset\vee{\alpha n}}L^i_{\underset\vee{\beta
      m}}+p_3L^\beta_{\underset\vee{mn}}L^i_{\underset\vee{\beta\alpha}}\big\}
      \endaligned\\
      &\aligned
      -a^i_\alpha\big\{R^\alpha_{jmn}&-d_{v_1}L^\alpha_{\underset\vee{jm}|n}+d_{v_2}L^\alpha_{\underset\vee{jn}|m}\\&+
      q_1L^\beta_{\underset\vee{jm}}L^\alpha_{\underset\vee{\beta n}}+
      q_2L^\beta_{\underset\vee{jn}}L^\alpha_{\underset\vee{\beta m}}+
      q_3L^\beta_{\underset\vee{mn}}L^\alpha_{\underset\vee{\beta
      j}}\big\}
      \endaligned\\&+
      a^\alpha_\beta\big\{
      r_1L^\beta_{\underset\vee{jm}}L^i_{\underset\vee{\alpha n}}+
      r_2L^\beta_{\underset\vee{jn}}L^i_{\underset\vee{\alpha m}}
      \big\},
      \endaligned\label{eq:RicciTypeIDSfamilygen}
    \end{equation}

    \noindent where
    \begin{align}
      &p_1=c_{v_1}c_{w_1}-c_{v_2}(c_{w_2}+d_{w_2})-(c_{w_1}-c_{v_2})(\rho^2_1-\rho^2_2+\rho^2_3-\rho^2_4),
      \label{eq:srtithma1}\\\displaybreak[0]
      &p_2=c_{v_1}(c_{w_1}+d_{w_1})-c_{v_2}c_{w_2}-(c_{v_1}-c_{w_2})(\rho^1_1-\rho^1_2+\rho^1_3-\rho^1_4),
      \label{eq:srtithma2}\\\displaybreak[0]
      &p_3=-c_{v_1}d_{w_1}-c_{v_2}d_{w_2}+(d_{w_1}+d_{w_2})(\rho^5_1-\rho^5_2+\rho^5_3-\rho^5_4),
      \label{eq:srtithma3}\\\displaybreak[0]
      &q_1=-d_{v_1}(c_{w_1}+d_{w_1})+d_{v_2}d_{w_2}-(d_{v_1}-d_{w_2})(\rho^3_1-\rho^3_2-\rho^3_3+\rho^3_4),
      \label{eq:srtithmb1}\\\displaybreak[0]
      &q_2=-d_{v_1}d_{w_1}+d_{v_2}(c_{w_2}+d_{w_2})-(d_{w_1}-d_{v_2})(\rho^4_1-\rho^4_2-\rho^4_3+\rho^4_4),
      \label{eq:srtithmb2}\\\displaybreak[0]
      &q_3=d_{v_1}d_{w_1}+d_{v_2}d_{w_2}-(d_{w_1}+d_{w_2})(\rho^5_1-\rho^5_2-\rho^5_3+\rho^5_4),
      \label{eq:srtithmb3}\\\displaybreak[0]
      &\aligned
      r_1&=c_{w_1}d_{v_1}-c_{v_2}d_{w_2}+(c_{w_1}-c_{v_2})(\rho^2_1-\rho^2_2-\rho^2_3+\rho^2_4)\\&-
      (d_{v_1}-d_{w_2})(\rho^3_1-\rho^3_2+\rho^3_3-\rho^3_4),
      \endaligned\label{eq:srtithml1}\\\displaybreak[0]
      &\aligned
      r_2&=c_{v_1}d_{w_1}-c_{w_2}d_{v_2}+(c_{v_1}-c_{w_2})(\rho^1_1-\rho^1_2-\rho^1_3+\rho^1_4)\\&-
      (d_{w_1}-d_{v_2})(\rho^4_1-\rho^4_2+\rho^4_3-\rho^4_4).
      \endaligned\label{eq:srtithml2}
    \end{align}
  \end{thm}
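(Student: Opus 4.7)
The plan is to derive (\ref{eq:RicciTypeIDSfamilygen}) directly from the First Ricci-Type Identities Theorem by inverting the defining relations (\ref{eq:Xsecondthm})--(\ref{eq:Vsecondthm}) and substituting. The key preliminary observation is that Remark \ref{remcovderivative}, applied with $p=q=1$, yields
\[
a^\alpha_{j\underset z|n}=a^\alpha_{j|n}+c_z L^\alpha_{\underset\vee{\beta n}}a^\beta_j+d_z L^\beta_{\underset\vee{jn}}a^\alpha_\beta,\qquad z=0,\ldots,4,
\]
so the normalization $\sum_{z=0}^{4}\rho^1_z=1$ forces
\[
X^\alpha_{jn}=a^\alpha_{j|n}+(\rho^1_1-\rho^1_2+\rho^1_3-\rho^1_4)L^\alpha_{\underset\vee{\beta n}}a^\beta_j+(-\rho^1_1+\rho^1_2+\rho^1_3-\rho^1_4)L^\beta_{\underset\vee{jn}}a^\alpha_\beta,
\]
and there are four analogous formulas for $Y, Z, U, V$ with the coefficient families $\rho^2,\rho^3,\rho^4,\rho^5$. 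Solving each for its leading term expresses every single-derivative combination that appears on the right-hand side of (\ref{eq:RicciTypeIDSfamily})—namely $a^\alpha_{j|n}$, $a^\alpha_{j|m}$, $a^i_{\alpha|n}$, $a^i_{\alpha|m}$, $a^i_{j|\alpha}$—as the corresponding $X,Y,Z,U,V$ minus two explicit quadratic correction terms.

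The proof then substitutes these five inversions into (\ref{eq:RicciTypeIDSfamily}). By construction, this produces exactly the five ``generalized'' leading terms $(c_{v_1}-c_{w_2})L^i_{\underset\vee{\alpha m}}X^\alpha_{jn}$, $(c_{w_1}-c_{v_2})L^i_{\underset\vee{\alpha n}}Y^\alpha_{jm}$, $(d_{v_1}-d_{w_2})L^\alpha_{\underset\vee{jm}}Z^i_{\alpha n}$, $(d_{w_1}-d_{v_2})L^\alpha_{\underset\vee{jn}}U^i_{\alpha m}$, $(d_{w_1}+d_{w_2})L^\alpha_{\underset\vee{mn}}V^i_{j\alpha}$ appearing in (\ref{eq:RicciTypeIDSfamilygen}), together with ten further $LL\hat a$-type correction terms (two per substitution). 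After a routine renaming of dummy indices and, for the $V$-substitution, an application of the torsion antisymmetry $L^i_{\underset\vee{jk}}=-L^i_{\underset\vee{kj}}$, each correction aligns with exactly one of the eight quadratic slots in the $a^\alpha_j\{\cdots\}$, $-a^i_\alpha\{\cdots\}$, or $a^\alpha_\beta\{\cdots\}$ blocks.

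The assignment is driven by which derivative is being inverted: the $X$-substitution feeds $p_2$ and $r_2$; the $Y$-substitution feeds $p_1$ and $r_1$; the $Z$-substitution feeds $q_1$ and $r_1$; the $U$-substitution feeds $q_2$ and $r_2$; and the $V$-substitution feeds $p_3$ and $q_3$. Collecting term by term and adding the correction to the first-identity value (for instance, adding $-(c_{w_1}-c_{v_2})(\rho^2_1-\rho^2_2+\rho^2_3-\rho^2_4)$ to $c_{v_1}c_{w_1}-c_{v_2}(c_{w_2}+d_{w_2})$ to obtain $p_1$) reproduces (\ref{eq:srtithma1})--(\ref{eq:srtithml2}), while the curvature terms $R^i_{\alpha mn}$, $R^\alpha_{jmn}$ and the single-covariant-derivative $L^i_{\underset\vee{\cdot\cdot}|\cdot}$--terms pass through unchanged because no substitution touches them.

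No step is conceptually hard; the main obstacle is bookkeeping, namely keeping track of signs through the renamings (especially for the $V$-contributions into $p_3$ and $q_3$, where the antisymmetry of $L^i_{\underset\vee{\beta\alpha}}$ and $L^\alpha_{\underset\vee{\beta j}}$ flips signs twice) and verifying that $r_1$ and $r_2$ each receive contributions from precisely two of the five substitutions in the combination recorded in (\ref{eq:srtithml1})--(\ref{eq:srtithml2}).
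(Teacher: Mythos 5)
Your proposal is correct and takes essentially the same approach as the paper: the paper likewise expands $L^i_{\underset\vee{\alpha m}}X^\alpha_{jn}$ and the four analogous products using $a^i_{j\underset z|k}=a^i_{j|k}+c_zL^i_{\underset\vee{\alpha k}}a^\alpha_j+d_zL^\alpha_{\underset\vee{jk}}a^i_\alpha$ together with the normalization of the $\rho$'s, then solves for the five products of a torsion component with a single covariant derivative and substitutes them into the First Ricci-Type Identities Theorem. Your accounting of which substitution contributes to which of $p_1,p_2,p_3,q_1,q_2,q_3,r_1,r_2$ (including the sign flips from the antisymmetry of the torsion in the $V$-terms) agrees with the paper's coefficient formulas.
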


  \begin{proof}
    We get

    \begin{align}
      &L^i_{\underset\vee{\alpha
      m}}X^\alpha_{jn}=L^i_{\underset\vee{\alpha m}}a^\alpha_{j|n}+
      (\rho^1_1-\rho^1_2+\rho^1_3-\rho^1_4)L^i_{\underset\vee{\beta
      m}}L^\beta_{\underset\vee{\alpha n}}a^\alpha_j-
      (\rho^1_1-\rho^1_2-\rho^1_3+\rho^1_4)L^i_{\underset\vee{\alpha
      m}}L^\beta_{\underset\vee{jn}}a^\alpha_\beta,\label{eq:srtithmproofx}\\\displaybreak[0]
      &L^i_{\underset\vee{\alpha
      n}}Y^\alpha_{jm}=L^i_{\underset\vee{\alpha n}}a^\alpha_{j|m}+
      (\rho^2_1-\rho^2_2+\rho^2_3-\rho^2_4)L^i_{\underset\vee{\beta
      n}}L^\beta_{\underset\vee{\alpha m}}a^\alpha_j-
      (\rho^2_1-\rho^2_2-\rho^2_3+\rho^2_4)L^i_{\underset\vee{\alpha
      n}}L^\beta_{\underset\vee{jm}}a^\alpha_\beta,\label{eq:srtithmproofy}\\\displaybreak[0]
      &L^\alpha_{\underset\vee{jm}}Z^i_{\alpha n}=
      L^\alpha_{\underset\vee{jm}}a^i_{\alpha|n}+
      (\rho^3_1-\rho^3_2+\rho^3_3-\rho^3_4)L^\beta_{\underset\vee{jm}}L^i_{\underset\vee{\alpha n}}a^\alpha_\beta-
      (\rho^3_1-\rho^3_2-\rho^3_3+\rho^3_4)L^\beta_{\underset\vee{jm}}
      L^\alpha_{\underset\vee{\beta n}}a^i_\alpha,\label{eq:srtithmproofz}\\\displaybreak[0]
      &L^\alpha_{\underset\vee{jn}}U^i_{\alpha m}=L^\alpha_{\underset\vee{jn}}a^i_{\alpha|m}+
      (\rho^4_1-\rho^4_2+\rho^4_3-\rho^4_4)L^\beta_{\underset\vee{jn}}L^i_{\underset\vee{\alpha m}}a^\alpha_\beta-
      (\rho^4_1-\rho^4_2-\rho^4_3+\rho^4_4)L^\beta_{\underset\vee{jn}}
      L^\alpha_{\underset\vee{\beta m}}a^i_\alpha,\label{eq:srtithmproofu}\\\displaybreak[0]
      &L^\alpha_{\underset\vee{mn}}V^i_{j\alpha}=L^\alpha_{\underset\vee{mn}}a^i_{j|\alpha}+
      (\rho^5_1-\rho^5_2+\rho^5_3-\rho^5_4)L^\beta_{\underset\vee{mn}}L^i_{\underset\vee{\alpha\beta}}a^\alpha_j-
      (\rho^5_1-\rho^5_2-\rho^5_3+\rho^5_4)L^\beta_{\underset\vee{mn}}
      L^\alpha_{\underset\vee{\beta j}}a^i_\alpha,\label{eq:srtithmproofv}
    \end{align}

    After expressing the terms

    \begin{equation*}
      \begin{array}{ccc}
      (c_{v_1}-c_{w_2})L^i_{\underset\vee{\alpha
      m}}a^\alpha_{j|n},&(c_{w_1}-c_{v_2})L^i_{\underset\vee{\alpha
      n}}a^\alpha_{j|m},&(d_{v_1}-d_{w_2})L^\alpha_{\underset\vee{jm}}a^i_{\alpha
      |n},\\
      \multicolumn{3}{c}{\begin{array}{cc}(d_{w_1}-d_{v_2})L^\alpha_{\underset\vee{jn}}a^i_{\alpha|
      m},&(d_{w_1}+d_{w_2})L^\alpha_{\underset\vee{mn}}a^i_{j|\alpha},
      \end{array}}\end{array}
    \end{equation*}

    \noindent with respect to the equalities
    (\ref{eq:srtithmproofx}--\ref{eq:srtithmproofv}) and
    substituting them into the equation (\ref{eq:RicciTypeIDSfamily}),
    one confirms the validity of the equation
    (\ref{eq:RicciTypeIDSfamilygen}).
  \end{proof}

The next equalities are satisfied

\begin{align}
  &\rho^z_1-\rho^z_2+\rho^z_3-\rho^z_4=(-1)^{1-1}\rho^z_1+(-1)^{2-1}\rho^z_2
  +(-1)^{3-1}\rho^z_3+\rho^{4-1}\rho^z_4,\label{eq:rho+-+-}\\\displaybreak[0]
  &\rho^z_1-\rho^z_2-\rho^z_3+\rho^z_4=
  (-1)^{\big\lfloor\frac12\big\rfloor}\rho^z_1+
  (-1)^{\big\lfloor\frac22\big\rfloor}\rho^z_2+
  (-1)^{\big\lfloor\frac32\big\rfloor}\rho^z_3+
  (-1)^{\big\lfloor\frac42\big\rfloor}\rho^z_4,\label{eq:rho+--+}
\end{align}

\noindent $z=1,\ldots,5$, for the floor function $\lfloor x\rfloor$
(the function that takes as input a real number $x$ and gives the
greatest integer less than or equal to $x$  as output).

Let be $\{n_1,n_2,n_3,n_4\}=\{1,2,3,4\}$. With respect to the
Proposition \ref{cor:LinIndependentCovDer}, we conclude that it is
enough to consider the case of $\rho^z_0=\rho^z_{n_4}=0$,
${n_4}\in\{1,2,3,4\}\backslash\{n_1,n_2,n_3\}$.

For integers $n_1,n_2,n_3$, $1\leq n_1<n_2<n_3\leq 4$, and with
respect to the equations (\ref{eq:rho+-+-}, \ref{eq:rho+--+})
substituted into the equation (\ref{eq:RicciTypeIDSfamilygen}), the
next theorem holds.

\begin{thm}[$n_1-n_2-n_3$-Second Ricci-Type Identities Theorem]
  Let be

  \begin{equation}
    \begin{array}{cc}
      \tilde X{}^i_{jk}=\rho^1_{n_1}a^i_{j\underset{n_1}|k}+
      \rho^1_{n_2}a^i_{j\underset{n_2}|k}+\rho^1_{n_3}a^i_{j\underset{n_3}|k},&
      \tilde Y{}^i_{jk}=\rho^2_{n_1}a^i_{j\underset{n_1}|k}+
      \rho^2_{n_2}a^i_{j\underset{n_2}|k}+\rho^2_{n_3}a^i_{j\underset{n_3}|k},\\
      \tilde Z{}^i_{jk}=\rho^3_{n_1}a^i_{j\underset{n_1}|k}+
      \rho^3_{n_2}a^i_{j\underset{n_2}|k}+\rho^3_{n_3}a^i_{j\underset{n_3}|k},&
      \tilde U{}^i_{jk}=\rho^4_{n_1}a^i_{j\underset{n_1}|k}+
      \rho^4_{n_2}a^i_{j\underset{n_2}|k}+\rho^4_{n_3}a^i_{j\underset{n_3}|k},\\
      \multicolumn{2}{c}{
      \tilde V{}^i_{jk}=\rho^5_{n_1}a^i_{j\underset{n_1}|k}+
      \rho^5_{n_2}a^i_{j\underset{n_2}|k}+\rho^5_{n_3}a^i_{j\underset{n_3}|k},}
    \end{array}
  \end{equation}

  \noindent for the tensor $\hat a$ of the type $(1,1)$.

  It holds the equation

  \begin{equation}
      \aligned
      a^i_{j\underset{v_1}|m\underset{w_1}|n}-
      a^i_{j\underset{v_2}|n\underset{w_2}|m}&=(c_{v_1}-c_{w_2})L^i_{\underset\vee{\alpha
      m}}\tilde X{}^\alpha_{jn}+(c_{w_1}-c_{v_2})L^i_{\underset\vee{\alpha
      n}}\tilde Y{}^\alpha_{jm}+(d_{v_1}-d_{w_2})L^\alpha_{\underset\vee{jm}}\tilde Z{}^i_{\alpha
      n}\\&+(d_{w_1}-d_{v_2})L^\alpha_{\underset\vee{jn}}\tilde U{}^i_{\alpha
      m}+(d_{w_1}+d_{w_2})L^\alpha_{\underset\vee{mn}}\tilde V{}^i_{j\alpha}\\&
      \aligned+a^\alpha_j\big\{R^i_{\alpha
      mn}&+c_{v_1}L^i_{\underset\vee{\alpha
      m}|n}-c_{v_2}L^i_{\underset\vee{\alpha
      n}|m}\\&+\tilde p{}_1L^\beta_{\underset\vee{\alpha
      m}}L^i_{\underset\vee{\beta n}}
      +\tilde p{}_2L^\beta_{\underset\vee{\alpha n}}L^i_{\underset\vee{\beta
      m}}+\tilde p{}_3L^\beta_{\underset\vee{mn}}L^i_{\underset\vee{\beta\alpha}}\big\}
      \endaligned\\
      &\aligned
      -a^i_\alpha\big\{R^\alpha_{jmn}&-d_{v_1}L^\alpha_{\underset\vee{jm}|n}+d_{v_2}L^\alpha_{\underset\vee{jn}|m}\\&+
      \tilde q{}_1L^\beta_{\underset\vee{jm}}L^\alpha_{\underset\vee{\beta n}}+
      \tilde q{}_2L^\beta_{\underset\vee{jn}}L^\alpha_{\underset\vee{\beta m}}+
      \tilde q{}_3L^\beta_{\underset\vee{mn}}L^\alpha_{\underset\vee{\beta
      j}}\big\}
      \endaligned\\&+
      a^\alpha_\beta\big\{
      \tilde r{}_1L^\beta_{\underset\vee{jm}}L^i_{\underset\vee{\alpha n}}+
      \tilde r{}_2L^\beta_{\underset\vee{jn}}L^i_{\underset\vee{\alpha m}}
      \big\},
      \endaligned\label{eq:RicciTypeIDSfamilygen}
    \end{equation}

    \noindent where
    \begin{align}
      &\tilde p_1=c_{v_1}c_{w_1}-c_{v_2}(c_{w_2}+d_{w_2})-(c_{w_1}-c_{v_2})\big((-1)^{n_1-1}\rho^2_{n_1}+(-1)^{n_2-1}\rho^2_{n_2}
      +(-1)^{n_3-1}\rho^2_{n_3}\big),
      \label{eq:srtithma1tilde}\\\displaybreak[0]
      &\tilde p{}_2=c_{v_1}(c_{w_1}+d_{w_1})-c_{v_2}c_{w_2}-(c_{v_1}-c_{w_2})\big((-1)^{n_1-1}\rho^1_{n_1}+(-1)^{n_2-1}\rho^1_{n_2}
      +(-1)^{n_3-1}\rho^1_{n_3}\big),
      \label{eq:srtithma2tilde}\\\displaybreak[0]
      &\tilde p{}_3=-c_{v_1}d_{w_1}-c_{v_2}d_{w_2}+(d_{w_1}+d_{w_2})\big((-1)^{n_1-1}\rho^5_{n_1}+(-1)^{n_2-1}\rho^5_{n_2}
      +(-1)^{n_3-1}\rho^5_{n_3}\big),
      \label{eq:srtithma3tilde}\\\displaybreak[0]
      &\tilde q{}_1=-d_{v_1}(c_{w_1}+d_{w_1})+d_{v_2}d_{w_2}-(d_{v_1}\!-\!d_{w_2})
      \big((-1)^{\big\lfloor\frac{n_1}2\big\rfloor}\rho^3_{n_1}\!+\!(-1)^{\big\lfloor\frac{n_2}2\big\rfloor}\rho^3_{n_2}
      \!+\!(-1)^{\big\lfloor\frac{n_3}2\big\rfloor}\rho^3_{n_3}\big),
      \label{eq:srtithmb1tilde}\\\displaybreak[0]
      &\tilde q{}_2=-d_{v_1}d_{w_1}+d_{v_2}(c_{w_2}+d_{w_2})-
      (d_{w_1}-d_{v_2})\big((-1)^{\big\lfloor\frac{n_1}2\big\rfloor}\rho^4_1+(-1)^{\big\lfloor\frac{n_2}2\big\rfloor}\rho^4_{n_2}
      +(-1)^{\big\lfloor\frac{n_3}2\big\rfloor}\rho^4_{n_3}\big),
      \label{eq:srtithmb2tilde}\\\displaybreak[0]
      &\tilde q{}_3=d_{v_1}d_{w_1}+d_{v_2}d_{w_2}-(d_{w_1}+d_{w_2})
      \big((-1)^{\big\lfloor\frac{n_1}2\big\rfloor}\rho^5_{n_1}+(-1)^{\big\lfloor\frac{n_2}2\big\rfloor}\rho^5_{n_2}
      +(-1)^{\big\lfloor\frac{n_3}2\big\lfloor}\rho^5_{n_3}\big),
      \label{eq:srtithmb3tilde}\\\displaybreak[0]
      &\aligned
      \tilde r{}_1&=c_{w_1}d_{v_1}-c_{v_2}d_{w_2}+(c_{w_1}-c_{v_2})
      \big((-1)^{\big\lfloor\frac{n_1}2\big\rfloor}\rho^2_{n_1}+(-1)^{\big\lfloor\frac{n_2}2\big\rfloor}\rho^2_{n_2}
      +(-1)^{\big\lfloor\frac{n_3}2\big\rfloor}\rho^2_{n_3}\big)\\&-
      (d_{v_1}-d_{w_2})\big((-1)^{n_1-1}\rho^3_{n_1}+(-1)^{n_2-1}\rho^3_{n_2}+(-1)^{n_3-1}\rho^3_{n_3}\big),
      \endaligned\label{eq:srtithml1tilde}\\\displaybreak[0]
      &\aligned
      \tilde r{}_2&=c_{v_1}d_{w_1}-c_{w_2}d_{v_2}+(c_{v_1}-c_{w_2})
      \big((-1)^{\big\lfloor\frac{n_1}2\big\rfloor}\rho^1_{n_1}+(-1)^{\big\lfloor\frac{n_2}2\big\rfloor}\rho^1_{n_2}
      +(-1)^{\big\lfloor\frac{n_3}2\big\rfloor}\rho^1_{n_3}\big)\\&-
      (d_{w_1}-d_{v_2})
      \big((-1)^{n_1-1}\rho^4_{n_1}+(-1)^{n_2-1}\rho^4_{n_2}+(-1)^{n_3-1}\rho^4_{n_3}\big),
      \endaligned\label{eq:srtithml2}
    \end{align}

    \noindent $\rho^z_{n_1}+\rho^z_{n_2}+\rho^z_{n_3}=1$,
    $z\in\{1,2,3,4,5\}$.\qed
\end{thm}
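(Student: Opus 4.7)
The plan is to derive the $n_1$-$n_2$-$n_3$-Second Ricci-Type Identities Theorem as a direct specialization of the (previously proved) Second Ricci-Type Identities Theorem, invoking the linear independence results of Corollary \ref{cor:LinIndependentCovDer} to reduce from five coefficients $\rho^z_0,\ldots,\rho^z_4$ to three nonzero ones $\rho^z_{n_1},\rho^z_{n_2},\rho^z_{n_3}$.

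First, I would fix the index $n_4\in\{1,2,3,4\}\setminus\{n_1,n_2,n_3\}$ and set $\rho^z_0=\rho^z_{n_4}=0$ for every $z\in\{1,\ldots,5\}$. By Corollary \ref{cor:LinIndependentCovDer}, any one of the triples $\underset 1{\mathcal A},\ldots,\underset 8{\mathcal A}$ is a basis for the span of the five covariant derivatives $a^i_{j\underset z|k}$; consequently, the choice of writing $X,Y,Z,U,V$ in (\ref{eq:Xsecondthm}--\ref{eq:Vsecondthm}) with support only in the three remaining indices $n_1,n_2,n_3$ costs no generality, and the normalization $\rho^z_0+\cdots+\rho^z_4=1$ collapses to $\rho^z_{n_1}+\rho^z_{n_2}+\rho^z_{n_3}=1$. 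After this substitution, $\tilde X^i_{jk},\tilde Y^i_{jk},\tilde Z^i_{jk},\tilde U^i_{jk},\tilde V^i_{jk}$ exactly replace $X^i_{jk},Y^i_{jk},Z^i_{jk},U^i_{jk},V^i_{jk}$ in equation (\ref{eq:RicciTypeIDSfamilygen}).

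Next I would pass the reduction through the scalar coefficients $p_1,p_2,p_3,q_1,q_2,q_3,r_1,r_2$ from (\ref{eq:srtithma1}--\ref{eq:srtithml2}). All dependence on the $\rho^z_i$ is packaged into two alternating sums:
\begin{equation*}
\rho^z_1-\rho^z_2+\rho^z_3-\rho^z_4,\qquad \rho^z_1-\rho^z_2-\rho^z_3+\rho^z_4.
\end{equation*}
Substituting $\rho^z_0=\rho^z_{n_4}=0$ and rewriting these sums via the general formulas (\ref{eq:rho+-+-}, \ref{eq:rho+--+}), one obtains
\begin{equation*}
\rho^z_1-\rho^z_2+\rho^z_3-\rho^z_4 = \sum_{\ell=1}^{3}(-1)^{n_\ell-1}\rho^z_{n_\ell},\qquad
\rho^z_1-\rho^z_2-\rho^z_3+\rho^z_4 = \sum_{\ell=1}^{3}(-1)^{\lfloor n_\ell/2\rfloor}\rho^z_{n_\ell},
\end{equation*}
because the vanishing index contributes nothing. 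Inserting these into (\ref{eq:srtithma1}--\ref{eq:srtithml2}) yields exactly the stated formulas (\ref{eq:srtithma1tilde}--\ref{eq:srtithml2}) for $\tilde p_1,\tilde p_2,\tilde p_3,\tilde q_1,\tilde q_2,\tilde q_3,\tilde r_1,\tilde r_2$.

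Finally, I would combine the two substitutions: replacing $X,Y,Z,U,V$ by $\tilde X,\tilde Y,\tilde Z,\tilde U,\tilde V$ in the left half of equation (\ref{eq:RicciTypeIDSfamilygen}) and the $p$'s, $q$'s, $r$'s by their tilde counterparts in the right half. The result is the claimed identity. The only real bookkeeping hazard is keeping straight which alternating pattern — the $(-1)^{n_\ell-1}$ one or the $(-1)^{\lfloor n_\ell/2\rfloor}$ one — is attached to each coefficient; a quick check against (\ref{eq:srtithma1}--\ref{eq:srtithml2}) confirms that the $p_i$ use the $+-+-$ pattern (coefficients $\rho^1,\rho^2,\rho^5$) while the $q_i$ and the $\rho^3$/$\rho^4$ contributions to $r_1,r_2$ use the $+--+$ pattern, matching (\ref{eq:srtithma1tilde}--\ref{eq:srtithml2}) term by term. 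No genuine new computation is involved beyond this substitution, so the theorem follows immediately from the Second Ricci-Type Identities Theorem and the two algebraic identities (\ref{eq:rho+-+-}, \ref{eq:rho+--+}).
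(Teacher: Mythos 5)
Your proposal is correct and follows essentially the same route as the paper, which likewise obtains this theorem with no new computation: one sets $\rho^z_0=\rho^z_{n_4}=0$, notes via Corollary \ref{cor:LinIndependentCovDer} that this loses no generality, and substitutes the identities (\ref{eq:rho+-+-}) and (\ref{eq:rho+--+}) into the Second Ricci-Type Identities Theorem. One cosmetic slip in your closing sanity-check: the $\rho^3$ and $\rho^4$ contributions to $r_1,r_2$ actually carry the $(-1)^{n_\ell-1}$ (that is, $+-+-$) pattern while the $\rho^2$ and $\rho^1$ contributions there carry the floor pattern, but this mislabeling does not affect the substitution procedure itself.
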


With respect to the Proposition \ref{cor:LinIndependentCovDer}, we
conclude that it is enough to consider the case of $\rho^z_{n_3}=0$,
$\rho^z_{n_4}=0$, $1\leq n_3<n_4\leq 4$,
$\{n_1,n_2\}=\{1,2,3,4\}\backslash\{n_3,n_4\}$,
$(n_1,n_2)\in\big\{(1,3),(1,4),(2,3),(2,4) \big\}$ as in the next
theorem.

\begin{thm}[$n_1-n_2$-Second Ricci-Type Identities Theorem]
  Let be

  \begin{equation}
    \begin{array}{cc}
      \tilde{\tilde X}{}^i_{jk}=\rho^1_{0}a^i_{j|k}+\rho^1_{n_1}a^i_{j\underset{n_1}|k}+
      \rho^1_{n_2}a^i_{j\underset{n_2}|k},&
      \tilde{\tilde Y}{}^i_{jk}=\rho^2_0a^i_{j|k}+\rho^2_{n_1}a^i_{j\underset{n_1}|k}+
      \rho^2_{n_2}a^i_{j\underset{n_2}|k},\\
      \tilde{\tilde Z}{}^i_{jk}=\rho^3_0a^i_{j|k}+\rho^3_{n_1}a^i_{j\underset{n_1}|k}+
      \rho^3_{n_2}a^i_{j\underset{n_2}|k},&
      \tilde{\tilde U}{}^i_{jk}=\rho^4_0a^i_{j|k}+\rho^4_{n_1}a^i_{j\underset{n_1}|k}+
      \rho^4_{n_2}a^i_{j\underset{n_2}|k},\\
      \multicolumn{2}{c}{
      \tilde{\tilde V}{}^i_{jk}=\rho^5_0a^i_{j|k}+\rho^5_{n_1}a^i_{j\underset{n_1}|k}+
      \rho^5_{n_2}a^i_{j\underset{n_2}|k},}
    \end{array}
  \end{equation}

  \noindent for the tensor $\hat a$ of the type $(1,1)$.

  It holds the equation

  \begin{equation}
      \aligned
      a^i_{j\underset{v_1}|m\underset{w_1}|n}-
      a^i_{j\underset{v_2}|n\underset{w_2}|m}&=(c_{v_1}-c_{w_2})L^i_{\underset\vee{\alpha
      m}}\tilde{\tilde X}{}^\alpha_{jn}+(c_{w_1}-c_{v_2})L^i_{\underset\vee{\alpha
      n}}\tilde{\tilde Y}{}^\alpha_{jm}+(d_{v_1}-d_{w_2})L^\alpha_{\underset\vee{jm}}\tilde{\tilde Z}{}^i_{\alpha
      n}\\&+(d_{w_1}-d_{v_2})L^\alpha_{\underset\vee{jn}}\tilde{\tilde U}{}^i_{\alpha
      m}+(d_{w_1}+d_{w_2})L^\alpha_{\underset\vee{mn}}\tilde{\tilde V}{}^i_{j\alpha}\\&
      \aligned+a^\alpha_j\big\{R^i_{\alpha
      mn}&+c_{v_1}L^i_{\underset\vee{\alpha
      m}|n}-c_{v_2}L^i_{\underset\vee{\alpha
      n}|m}\\&+\tilde{\tilde p}{}_1L^\beta_{\underset\vee{\alpha
      m}}L^i_{\underset\vee{\beta n}}
      +\tilde{\tilde p}{}_2L^\beta_{\underset\vee{\alpha n}}L^i_{\underset\vee{\beta
      m}}+\tilde{\tilde p}{}_3L^\beta_{\underset\vee{mn}}L^i_{\underset\vee{\beta\alpha}}\big\}
      \endaligned\\
      &\aligned
      -a^i_\alpha\big\{R^\alpha_{jmn}&-d_{v_1}L^\alpha_{\underset\vee{jm}|n}+d_{v_2}L^\alpha_{\underset\vee{jn}|m}\\&+
      \tilde{\tilde q}{}_1L^\beta_{\underset\vee{jm}}L^\alpha_{\underset\vee{\beta n}}+
      \tilde{\tilde q}{}_2L^\beta_{\underset\vee{jn}}L^\alpha_{\underset\vee{\beta m}}+
      \tilde{\tilde q}{}_3L^\beta_{\underset\vee{mn}}L^\alpha_{\underset\vee{\beta
      j}}\big\}
      \endaligned\\&+
      a^\alpha_\beta\big\{
      \tilde{\tilde r}{}_1L^\beta_{\underset\vee{jm}}L^i_{\underset\vee{\alpha n}}+
      \tilde{\tilde r}{}_2L^\beta_{\underset\vee{jn}}L^i_{\underset\vee{\alpha m}}
      \big\},
      \endaligned\label{eq:RicciTypeIDSfamilygen}
    \end{equation}

    \noindent where
    \begin{align}
      &\tilde{\tilde p}{}_1=c_{v_1}c_{w_1}-c_{v_2}(c_{w_2}+d_{w_2})-(c_{w_1}-c_{v_2})\big((-1)^{n_1-1}\rho^2_{n_1}+(-1)^{n_2-1}\rho^2_{n_2}
      \big),
      \label{eq:srtithma1tildetilde}\\\displaybreak[0]
      &\tilde{\tilde p}{}_2=c_{v_1}(c_{w_1}+d_{w_1})-c_{v_2}c_{w_2}-(c_{v_1}-c_{w_2})\big((-1)^{n_1-1}\rho^1_{n_1}+(-1)^{n_2-1}\rho^1_{n_2}
      \big),
      \label{eq:srtithma2tildetilde}\\\displaybreak[0]
      &\tilde{\tilde p}{}_3=-c_{v_1}d_{w_1}-c_{v_2}d_{w_2}+(d_{w_1}+d_{w_2})\big((-1)^{n_1-1}\rho^5_{n_1}+(-1)^{n_2-1}\rho^5_{n_2}
      \big),
      \label{eq:srtithma3tildetilde}\\\displaybreak[0]
      &\tilde{\tilde q}{}_1=-d_{v_1}(c_{w_1}+d_{w_1})+d_{v_2}d_{w_2}\!-\!(d_{v_1}\!-\!d_{w_2})
      \big((-1)^{\big\lfloor\frac{n_1}2\big\rfloor}\rho^3_{n_1}\!+\!(-1)^{\big\lfloor\frac{n_2}2\big\rfloor}\rho^3_{n_2}
      \big),
      \label{eq:srtithmb1tildetilde}\\\displaybreak[0]
      &\tilde{\tilde
      q}{}_2=-d_{v_1}d_{w_1}-d_{v_2}(c_{w_2}+d_{w_2})-
      (d_{w_1}-d_{v_2})\big((-1)^{\big\lfloor\frac{n_1}2\big\rfloor}\rho^4_{n_1}\!+\!(-1)^{\big\lfloor\frac{n_2}2\big\rfloor}\rho^4_{n_2}
      \big),
      \label{eq:srtithmb2tildetilde}\\\displaybreak[0]
      &\tilde{\tilde q}{}_3=d_{v_1}d_{w_1}+d_{v_2}d_{w_2}-(d_{w_1}+d_{w_2})
      \big((-1)^{\big\lfloor\frac{n_1}2\big\rfloor}\rho^5_{n_1}+(-1)^{\big\lfloor\frac{n_2}2\big\rfloor}\rho^5_{n_2}
      \big),
      \label{eq:srtithmb3tildetilde}\\\displaybreak[0]
      &\aligned
      \tilde{\tilde r}{}_1&=c_{w_1}d_{v_1}-c_{v_2}d_{w_2}+(c_{w_1}-c_{v_2})
      \big((-1)^{\big\lfloor\frac{n_1}2\big\rfloor}\rho^2_{n_1}+(-1)^{\big\lfloor\frac{n_2}2\big\rfloor}\rho^2_{n_2}
      \big)\\&-
      (d_{v_1}-d_{w_2})\big((-1)^{n_1-1}\rho^3_{n_1}+(-1)^{n_2-1}\rho^3_{n_2}\big),
      \endaligned\label{eq:srtithml1tildetilde}\\\displaybreak[0]
      &\aligned
      \tilde{\tilde r}{}_2&=c_{v_1}d_{w_1}-c_{w_2}d_{v_2}+(c_{v_1}-c_{w_2})
      \big((-1)^{\big\lfloor\frac{n_1}2\big\rfloor}\rho^1_{n_1}+(-1)^{\big\lfloor\frac{n_2}2\big\rfloor}\rho^1_{n_2}
      \big)\\&-
      (d_{w_1}-d_{v_2})
      \big((-1)^{n_1-1}\rho^4_{n_1}+(-1)^{n_2-1}\rho^4_{n_2}\big),
      \endaligned\label{eq:srtithml2}
    \end{align}

    \noindent $\rho^z_{0}+\rho^z_{n_1}+\rho^z_{n_2}=1$,
    $z\in\{1,2,3,4,5\}$.\qed
\end{thm}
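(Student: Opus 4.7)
The plan is to derive this theorem as a direct specialization of the Second Ricci-Type Identities Theorem, exploiting Corollary \ref{cor:LinIndependentCovDer}. Since each triple $\underset{s}{\mathcal A}$, $s=5,6,7,8$, listed in (\ref{eq:linindcovderA}) consists of linearly independent geometrical objects, any one of the remaining covariant derivatives $a^i_{j\underset{z}|k}$ can be expressed in terms of the chosen three. Thus, without loss of generality, we may restrict the expansions (\ref{eq:Xsecondthm}--\ref{eq:Vsecondthm}) to three summands only, namely $a^i_{j|k}$, $a^i_{j\underset{n_1}|k}$, $a^i_{j\underset{n_2}|k}$; operationally, this amounts to setting $\rho^z_{n_3}=\rho^z_{n_4}=0$ in the original theorem, where $\{n_3,n_4\}=\{1,2,3,4\}\setminus\{n_1,n_2\}$. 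The normalization $\rho^z_0+\rho^z_{n_1}+\rho^z_{n_2}=1$ is then inherited from the original normalization $\sum_{k=0}^4\rho^z_k=1$.

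First I will substitute $\rho^z_{n_3}=\rho^z_{n_4}=0$ into the coefficient formulas (\ref{eq:srtithma1})--(\ref{eq:srtithml2}). The alternating sums of $\rho$'s appearing there reduce as follows: by (\ref{eq:rho+-+-}),
\begin{equation*}
\rho^z_1-\rho^z_2+\rho^z_3-\rho^z_4\;\longrightarrow\;(-1)^{n_1-1}\rho^z_{n_1}+(-1)^{n_2-1}\rho^z_{n_2},
\end{equation*}
since the two omitted indices contribute zero; analogously, by (\ref{eq:rho+--+}),
\begin{equation*}
\rho^z_1-\rho^z_2-\rho^z_3+\rho^z_4\;\longrightarrow\;(-1)^{\lfloor n_1/2\rfloor}\rho^z_{n_1}+(-1)^{\lfloor n_2/2\rfloor}\rho^z_{n_2}.
\end{equation*}
Performing these replacements in (\ref{eq:srtithma1})--(\ref{eq:srtithml2}) yields exactly the expressions for $\tilde{\tilde p}{}_1,\ldots,\tilde{\tilde r}{}_2$ claimed in (\ref{eq:srtithma1tildetilde})--(\ref{eq:srtithml2}).

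Second, the left-hand side $a^i_{j\underset{v_1}|m\underset{w_1}|n}-a^i_{j\underset{v_2}|n\underset{w_2}|m}$ is unchanged, while the geometrical objects $X,Y,Z,U,V$ appearing on the right side of (\ref{eq:RicciTypeIDSfamilygen}) collapse precisely to $\tilde{\tilde X}, \tilde{\tilde Y}, \tilde{\tilde Z}, \tilde{\tilde U}, \tilde{\tilde V}$ under the vanishing choices $\rho^z_{n_3}=\rho^z_{n_4}=0$. Hence the desired identity follows by direct substitution into the Second Ricci-Type Identities Theorem.

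The only genuine subtlety — and the step one must handle with care — is verifying the sign patterns arising from the floor function in (\ref{eq:rho+--+}) for each allowed pair $(n_1,n_2)\in\{(1,3),(1,4),(2,3),(2,4)\}$. This is a finite bookkeeping check but it is the place where a sign slip would propagate into every $\tilde{\tilde q}{}_i$ and $\tilde{\tilde r}{}_i$; accordingly, I would present it as a short table enumerating $(-1)^{n-1}$ and $(-1)^{\lfloor n/2\rfloor}$ for $n=1,2,3,4$ before invoking (\ref{eq:rho+-+-}, \ref{eq:rho+--+}), after which the remainder of the argument is a purely mechanical substitution.
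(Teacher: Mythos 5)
Your proposal is correct and follows essentially the same route as the paper: the theorem is obtained by specializing the Second Ricci-Type Identities Theorem to $\rho^z_{n_3}=\rho^z_{n_4}=0$ and rewriting the alternating sums via the sign identities (\ref{eq:rho+-+-}) and (\ref{eq:rho+--+}), exactly as the paragraph preceding the theorem indicates. Note only that your mechanical substitution yields $+d_{v_2}(c_{w_2}+d_{w_2})$ in $\tilde{\tilde q}{}_2$, consistent with $q_2$ in (\ref{eq:srtithmb2}) and with $\tilde q{}_2$ in the $n_1$--$n_2$--$n_3$ version, so the minus sign printed in (\ref{eq:srtithmb2tildetilde}) appears to be a typographical slip in the statement rather than a defect of your argument.
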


\begin{thm}[Commutation Formulae Theorem]
  Fifteen of the geometrical objects $a^i_{j\underset{v_1}|m\underset{w_1}|n}-
  a^i_{j\underset{v_2}|n\underset{w_2}|m}$,
  $v_1,v_2,w_1,w_2\in\{0,1,2,3,4\}$, are linearly independent.
\end{thm}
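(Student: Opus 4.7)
The plan is to reduce the theorem to a rank computation on a finite matrix. Starting from formula (\ref{eq:RicciTypeIDSfamily}) in the ``default'' case $\rho^z_0=1$ (equivalently, the First Ricci-Type Identities Theorem), every difference
\[
D(v_1,w_1,v_2,w_2):=a^i_{j\underset{v_1}|m\underset{w_1}|n}-a^i_{j\underset{v_2}|n\underset{w_2}|m}
\]
is a linear combination of the same $19$ tensor ``atoms'' (the five first-order products $L\cdot a_{|}$; the six $a^\alpha_j(\cdots)$; the six $a^i_\alpha(\cdots)$; and the two $a^\alpha_\beta(\cdots)$) whose scalar coefficients are polynomials of degree $\le 2$ in the $8$ parameters $c_{v_1},d_{v_1},c_{w_1},d_{w_1},c_{v_2},d_{v_2},c_{w_2},d_{w_2}$. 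Exactly $17$ monomials appear: the constant $1$, the $8$ linears, and the $8$ ``same-pair'' bilinears $c_{v_1}c_{w_1},c_{v_1}d_{w_1},d_{v_1}c_{w_1},d_{v_1}d_{w_1}$ and their four primed analogues (no cross terms between primed and unprimed indices). Writing the coefficient vector in the $19$-dimensional atom space as $M\mu$, with a fixed $19\times 17$ matrix $M$ and the monomial-evaluation vector $\mu\in\mathbb{R}^{17}$, the question becomes: compute $\mathrm{rank}(M)$.

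First I would verify that as $(v_1,w_1,v_2,w_2)$ varies, the monomial evaluations $\mu$ span the full $\mathbb{R}^{17}$. This uses the rank-$3$ statement from Proposition~2.2: since $(1,c_z,d_z)$ for $z\in\{0,\dots,4\}$ spans $\mathbb{R}^3$, the tensor-product features $(1,c_{v_i},d_{v_i})\otimes(1,c_{w_i},d_{w_i})$ span $\mathbb{R}^9$ for each pair; the two $9$-dim blocks share only the constant monomial, yielding total span $\mathbb{R}^{9+9-1}=\mathbb{R}^{17}$. Hence $\dim\mathrm{span}\{D(v_1,w_1,v_2,w_2)\}=\mathrm{rank}(M)$.

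To show $\mathrm{rank}(M)\le 15$ I would exhibit four linearly independent row dependencies of $M$:
\begin{enumerate}[(i)]
\item $\mathrm{row}_6+\mathrm{row}_{12}=0$, because $a^\alpha_j R^i_{\alpha mn}$ and $a^i_\alpha R^\alpha_{jmn}$ enter with the constant coefficients $+1$ and $-1$;
\item $\mathrm{row}_9-\mathrm{row}_{10}-\mathrm{row}_{11}=0$, from $p_1=p_2+p_3$ in (\ref{eq:srtithma1})--(\ref{eq:srtithma3});
\item a relation among rows $3,4,5,13,14$ from $(d_{w_1}+d_{w_2})=(d_{w_1}-d_{v_2})-(d_{v_1}-d_{w_2})+d_{v_1}+d_{v_2}$;
\item a relation among rows $11,15,16,18,19$, i.e.\ $p_3-q_1+q_2-r_1+r_2=0$, verified by noting that each of the eight bilinear monomials has net coefficient zero.
\end{enumerate}
These four dependency vectors have nearly disjoint supports and are easily seen to be linearly independent in $\mathbb{R}^{19}$. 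For the lower bound $\mathrm{rank}(M)\ge 15$, I would delete one row from each relation (say rows $5,9,12,16$) and verify that the remaining $15$ rows are linearly independent: they split cleanly into a ``linear'' block of $9$ rows whose monomial support is $\{1,c_{v_1},d_{v_1},c_{w_1},d_{w_1},c_{v_2},d_{v_2},c_{w_2},d_{w_2}\}$ and which reduces to an identity block by inspection, and a ``bilinear'' block of $6$ rows (for atoms $10,11,15,17,18,19$) whose independence is a short coefficient-matching argument (reading off $c_{v_1}c_{w_1}$, $d_{v_1}c_{w_1}$, $d_{v_1}d_{w_1}$, $c_{v_2}d_{w_2}$, $d_{v_2}c_{w_2}$, $d_{v_2}d_{w_2}$ in turn forces all scalars to vanish). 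Combining, $\mathrm{rank}(M)=15$, so the family of differences spans a $15$-dimensional subspace.

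The main obstacle is locating the fourth row relation (iv); relations (i)--(iii) are structural and visible directly from the form of (\ref{eq:RicciTypeIDSfamily}), but (iv) mixes $p$-, $q$-, and $r$-type coefficients and is found only by systematic monomial bookkeeping. Once the four relations are in hand the remaining independence check is routine linear algebra, and the conclusion ``$15$ linearly independent differences'' follows from $\mathrm{rank}(M)=15$ (under the mild genericity assumption that the $19$ atoms are themselves linearly independent as tensor expressions).
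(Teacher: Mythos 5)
Your proposal is correct, and I verified its key computational claims: the four row dependencies (i)--(iv) all hold identically in the eight parameters (in particular $p_1=p_2+p_3$ and $p_3-q_1+q_2-r_1+r_2=0$ with the $\rho$'s set to zero), the remaining fifteen rows are independent, and the monomial vectors do span $\mathbb{R}^{17}$ because the five triples $(1,c_z,d_z)$ span $\mathbb{R}^3$. However, your route is genuinely different from the paper's. The paper does not compute any rank at all: it uses Corollary \ref{cor:LinIndependentCovDer} to write $a^i_{j\underset{v_1}|m}$ and the outer derivative as combinations of the kinds $1,2,3$, concludes that every double derivative is a linear combination of the nine objects $a^i_{j\underset{v_1'}|m\underset{w_1'}|n}$ with $v_1',w_1'\in\{1,2,3\}$, and then cites the ``$1$--$2$--$3$-Commutation Formulae Theorem'' of \cite{jacovder1} for the count. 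What your approach buys is self-containedness and, incidentally, a consistency check: the cited result is quoted in the paper's proof as giving \emph{sixteen} independent differences, which contradicts the theorem's statement of fifteen; since the restricted family with indices in $\{1,2,3\}$ spans the same space as the full family (your span-of-$\mu$ argument works verbatim with $z$ restricted to $\{1,2,3\}$), your direct computation of $\mathrm{rank}(M)=15$ actually adjudicates this discrepancy in favour of the stated ``fifteen.'' What the paper's approach buys is brevity and a reduction that is conceptually aligned with its Corollary \ref{cor:LinIndependentCovDer}, at the cost of resting the whole count on an unpublished reference. One caveat applies to both arguments equally and you are right to flag it: the conclusion ``the span is $15$-dimensional'' presupposes that the nineteen tensorial atoms appearing in (\ref{eq:RicciTypeIDSfamily}) are themselves linearly independent, a genericity assumption the paper leaves implicit.
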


\begin{proof}
  With respect to the Corollary \ref{cor:LinIndependentCovDer} and
  the equation (\ref{eq:linindcovderA}) in this corollary, we get
  \begin{align}
  &a^i_{j\underset{v_1}|m}=x_1a^i_{j\underset1|m}+x_2a^i_{j\underset2|m}+x_3a^i_{j\underset3|m},\label{eq:a|v1(1,2,3)}\\
  &a^i_{j\underset{v_1}|m\underset{w_1}|n}=y_1a^i_{j\underset{v_1}|m\underset1|n}+
  y_2a^i_{j\underset{v_1}|\underset2|n}+y_3a^i_{j\underset{v_1}|m\underset3|n},\label{eq:a|v1|w1(1,2,3)}
  \end{align}

  \noindent for the corresponding scalars $x_1,x_2,x_3,y_1,y_2,y_3$.

  After substituting the expression (\ref{eq:a|v1(1,2,3)}) into the
  equation (\ref{eq:a|v1|w1(1,2,3)}), one gets that the double
  covariant derivative $a^i_{j\underset{v_1}|m\underset{w_1}|n}$ is
  a linear combination of the geometrical objects
  $a^i_{j\underset{v_1'}|m\underset{w_1'}|n}$, for
  $v_1',w_1'\in\{1,2,3\}$.

  In $1-2-3$-Commutation Formulae Theorem \cite{jacovder1},
  it is proved that sixteen of the geometrical objects $a^i_{j\underset{v_1'}|m\underset{w_1'}|n}
  -a^i_{j\underset{v_2'}|n\underset{w_2'}|m}$, for
  $v_1',v_2',w_1',w_2'\in\{1,2,3\}$, are linearly independent, which completes
  the proof for this theorem.
\end{proof}

\section{Identities of Ricci Type with respect to tensor $\hat a$ of
type $(p,q)$}

It holds the next equation.

\begin{scriptsize}
\begin{equation}
  \aligned
  a^{i_1\ldots i_p}_{j_1\ldots j_q\underset v|m\underset w|n}&=
  a^{i_1\ldots i_p}_{j_1\ldots
  j_q|m|n}+c_v\sum_{k=1}^p{L^{i_k}_{\underset\vee{\alpha
  m}}a^{i_1\ldots i_{k-1}\alpha i_{k+1}\ldots i_p}_{j_1\ldots
  j_q|n}}+c_w\sum_{k=1}^p{L^{i_k}_{\underset\vee{\alpha
  n}}a^{i_1\ldots i_{k-1}\alpha i_{k+1}\ldots i_p}_{j_1\ldots
  j_q|m}}\\&+d_v\sum_{l=1}^q{L^\alpha_{\underset\vee{j_lm}}a^{i_1\ldots
  i_p}_{j_1\ldots j_{l-1}\alpha j_{l+1}\ldots j_q|n}}+
  d_w\sum_{l=1}^q{L^\alpha_{\underset\vee{j_ln}}a^{i_1\ldots
  i_p}_{j_1\ldots j_{l-1}\alpha j_{l+1}\ldots
  j_q|m}}+d_wL^\alpha_{\underset\vee{mn}}a^{i_1\ldots
  i_p}_{j_1\ldots j_q|\alpha}\\&+
  c_vc_w\Big\{
  \sum_{k=1}^p{\sum_{\kappa=1}^{k-1}{L^{i_k}_{\underset\vee{\alpha
  m}}L^{i_\kappa}_{\underset\vee{\beta n}}a^{i_1\ldots
  i_{\kappa-1}\beta i_{\kappa+1}\ldots i_{k-1}\alpha i_{k+1}\ldots
  i_p}_{j_1\ldots j_q}}}+
  \sum_{k=1}^p{\sum_{\kappa=k+1}^{p}{L^{i_k}_{\underset\vee{\alpha
  m}}L^{i_\kappa}_{\underset\vee{\beta n}}a^{i_1\ldots i_{k-1}\alpha i_{k+1}\ldots
  i_{\kappa-1}\beta i_{\kappa+1}\ldots
  i_p}_{j_1\ldots j_q}}}
  \Big\}\\&+
  d_vd_w\Big\{
  \sum_{l=1}^q{\sum_{\ell=1}^{s-1}{L^\beta_{\underset\vee{j_\ell
  n}}L^\alpha_{\underset\vee{j_lm}}a^{i_1\ldots i_p}_{j_1\ldots
  j_{\ell-1}\beta j_{\ell+1}\ldots j_{l-1}\alpha j_{l+1}\ldots
  j_q}}}+\sum_{l=1}^q{\sum_{\ell=l+1}^{q}{L^\beta_{\underset\vee{j_\ell
  n}}L^\alpha_{\underset\vee{j_lm}}a^{i_1\ldots i_p}_{j_1\ldots j_{l-1}\alpha j_{l+1}\ldots
  j_{\ell-1}\beta j_{\ell+1}\ldots
  j_q}}}
  \Big\}\\&+
  \sum_{k=1}^p{a^{i_1\ldots i_{k-1}\alpha i_{k+1}\ldots
  i_p}_{j_1\ldots j_q}\Big(
  c_vL^{i_k}_{\underset\vee{\alpha
  m}|n}+c_vc_wL^\beta_{\underset\vee{\alpha
  m}}L^{i_k}_{\underset\vee{\beta
  n}}+c_v(c_w+d_w)L^\beta_{\underset\vee{\alpha
  n}}L^{i_k}_{\underset\vee{\beta
  m}}-c_vd_wL^\beta_{\underset\vee{mn}}L^{i_k}_{\underset\vee{\beta\alpha}}
  \Big)}\\&-
  \sum_{l=1}^q{a^{i_1\ldots i_p}_{j_1\ldots j_{l-1}\alpha
  j_{l+1}\ldots j_q}\Big(
  -d_vL^\alpha_{\underset\vee{j_lm}|n}-d_v(c_w+d_w)L^\beta_{\underset\vee{j_lm}}L^\alpha_{\underset\vee{\beta
  n}}-d_vd_wL^\beta_{\underset\vee{j_ln}}L^\alpha_{\underset\vee{\beta
  m}}+d_vd_wL^\beta_{\underset\vee{mn}}L^\alpha_{\underset\vee{\beta
  j_l}}
  \Big)}\\&+\sum_{k=1}^p{\sum_{l=1}^q{a^{i_1\ldots i_{k-1}\alpha
  i_{k+1}\ldots i_p}_{j_1\ldots j_{l-1}\beta j_{l+1}\ldots j_q}
  \Big(c_wd_vL^\beta_{\underset\vee{j_lm}}L^{i_k}_{\underset\vee{\beta
  n}}+c_vd_wL^\beta_{\underset\vee{j_ln}}L^{i_k}_{\underset\vee{\alpha
  m}}\Big)}}.
  \endaligned\label{eq:apqvw}
\end{equation}
\end{scriptsize}

With respect to the equation (\ref{eq:a=a+cz+dz}), one generalizes
the results obtained in the previous section with the next theorems.

\begin{thm}[General First Ricci-Type Identities Theorem]
  The family of identities of the Ricci Type with respect to a
  non-symmetric affine connection $\nabla$ and a tensor $\hat a$ of
  the type $(p,q)$, $p,q\in\mathbb N$, is

    \begin{equation}
      \aligned
      a^{i_1\ldots i_p}_{j_1\ldots
      j_q\underset{v_1}|m\underset{w_1}|n}-
      a^{i_1\ldots i_p}_{j_1\ldots
      j_q\underset{v_2}|n\underset{w_2}|m}&=\overset1{\mathcal
      S}{}^{i_1\ldots i_p}_{j_1\ldots j_qmn}+\overset2{\mathcal S}{}^{i_1\ldots i_p}_{j_1\ldots j_qmn}
      +(d_{w_1}+d_{w_2})L^\alpha_{\underset\vee{mn}}
      a^{i_1\ldots i_p}_{j_1\ldots j_q|\alpha}
      \\&+\sum_{k=1}^p{a^{i_1\ldots i_{k-1}\alpha i_{k+1}\ldots
      i_p}_{j_1\ldots j_q}\overset1{\mathcal R}{}^{i_k}_{\alpha mn}}-
      \sum_{l=1}^q{a^{i_1\ldots i_p}_{j_1\ldots j_{l-1}\alpha
      j_{l+1}\ldots j_q}\overset 2{\mathcal R}{}^\alpha_{j_lmn}}
      \\&+\sum_{k=1}^p{\sum_{l=1}^q{a^{i_1\ldots i_{k-1}\alpha
      i_{k+1}\ldots i_p}_{j_1\ldots j_{l-1}\beta j_{l+1}\ldots
      j_q}\overset3{\mathcal R}{}^{\beta i_k}_{\alpha j_lmn}}}+\mathcal Z^i_{jmn},
      \endaligned
    \end{equation}

  \noindent where

  \begin{align}
    &\overset1{\mathcal S}{}^{i_1\ldots i_p}_{j_1\ldots j_qmn}=
    \sum_{k=1}^p{\big\{(c_{v_1}\!-\!c_{w_2})L^{i_k}_{\underset\vee{\alpha
      m}}a^{i_1\ldots i_{k-1}\alpha i_{k+1}\ldots i_p}_{j_1\ldots
      j_q|n}\!+\!(c_{w_1}\!-\!c_{v_2})L^{i_k}_{\underset\vee{\alpha
      n}}a^{i_1\ldots i_{k-1}\alpha i_{k+1}\ldots i_p}_{j_1\ldots
      j_q|m}\big\}},\label{eq:RicciTypeIDSS1}\\\displaybreak[0]
      &\overset2{\mathcal S}{}^{i_1\ldots i_p}_{j_1\ldots j_qmn}=
      \sum_{l=1}^q{\big\{(d_{v_1}\!-\!d_{w_2})L^\alpha_{\underset\vee{j_lm}}a^{i_1\ldots
      i_p}_{j_1\ldots j_{l-1}\alpha j_{l+1}\ldots
      j_q|n}
      +(d_{w_1}\!-\!d_{v_2})L^\alpha_{\underset\vee{j_ln}}a^{i_1\ldots
      i_p}_{j_1\ldots j_{r-1}\alpha j_{r+1}\ldots
      j_q|m}\big\}},\label{eq:RicciTypeIDSS2}\\\displaybreak[0]
      &\aligned
      \overset1{\mathcal
      R}{}^i_{jmn}&=R^i_{jmn}+c_{v_1}L^i_{\underset\vee{jm}|n}-c_{v_2}L^i_{\underset\vee{jn}|m}+
      \big[c_{v_1}c_{w_1}-c_{v_2}(c_{w_2}+d_{w_2})\big]L^\alpha_{\underset\vee{jm}}L^i_{\underset\vee{\alpha
      n}}\\&+\big[c_{v_1}(c_{w_1}+d_{w_1})-c_{v_2}c_{w_2}\big]L^\alpha_{\underset\vee{jn}}L^i_{\underset\vee{\alpha
      m}}-(c_{v_1}d_{w_1}+c_{v_2}d_{w_2})L^\alpha_{\underset\vee{mn}}L^i_{\underset\vee{\alpha
      j}},
      \endaligned\label{eq:RicciTypeIDSR1}\\\displaybreak[0]
      &\aligned
      \overset2{\mathcal
      R}{}^i_{jmn}&=R^i_{jmn}-d_{v_1}L^i_{\underset\vee{jm}|n}+d_{v_2}L^i_{\underset\vee{jn}|m}-
      \big[d_{v_1}(c_{w_1}+d_{w_1})-d_{v_2}d_{w_2}\big]L^\alpha_{\underset\vee{jm}}L^i_{\underset\vee{\alpha
      n}}\\&-\big[d_{v_1}d_{w_1}-d_{v_2}(c_{w_2}+d_{w_2})\big]L^\alpha_{\underset\vee{jn}}L^i_{\underset\vee{\alpha
      m}}+(d_{v_1}d_{w_1}+d_{v_2}d_{w_2})L^\alpha_{\underset\vee{mn}}L^i_{\underset\vee{\alpha
      j}},
      \endaligned\label{eq:RicciTypeIDSR2}\\\displaybreak[0]
      &\aligned
      \overset3{\mathcal
      R}{}^{ri}_{sjmn}&=(c_{w_1}d_{v_1}-c_{v_2}d_{w_2})L^r_{\underset\vee{jm}}L^i_{\underset\vee{sn}}+
      (c_{v_1}d_{w_1}-c_{w_2}d_{v_2})L^r_{\underset\vee{jn}}L^i_{\underset\vee{sm}},
      \endaligned\label{eq:RicciTypeIDSR3}
  \end{align}

  \noindent $Z^i_{jmn}$ is the sum of the third and the fourth row
  in the equation \emph{(\ref{eq:apqvw})} and $v_1,v_2,w_1,w_2\in\{0,1,2,3,4\}$.\qed
\end{thm}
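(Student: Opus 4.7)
The plan is to derive the identity by direct subtraction, starting from the explicit closed form \eqref{eq:apqvw} for $a^{i_1\ldots i_p}_{j_1\ldots j_q\underset v|m\underset w|n}$ written with the pair $(v,w)=(v_1,w_1)$, and subtracting the same expression with $(v,w)=(v_2,w_2)$ after interchanging $m\leftrightarrow n$. All building blocks are already available: the five covariant derivatives are related by Remark \ref{remcovderivative}, and the symmetric-connection Ricci identity \eqref{eq:RicciTypeIdSymm} supplies the $R^i_{jmn}$-part.

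First I would split the right-hand side of \eqref{eq:apqvw} into four groups according to how each term reacts under $m\leftrightarrow n$ together with the swap $(v_1,w_1)\mapsto(v_2,w_2)$: (i) the ``purely symmetric'' piece $a^{i_1\ldots i_p}_{j_1\ldots j_q|m|n}$, (ii) the five linear $L_{\underset\vee{\cdot\cdot}}\,a_{\cdot|\cdot}$ terms on row two of \eqref{eq:apqvw}, (iii) the quadratic $c_vc_w$ and $d_vd_w$ double sums in rows three--four (which become $\mathcal Z^i_{jmn}$ in the statement), and (iv) the terms that are tensor-linear in $a$ and quadratic in $L_{\underset\vee{\cdot\cdot}}$ on rows five--seven. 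Group (i) contributes $a^{i_1\ldots i_p}_{j_1\ldots j_q|m|n}-a^{i_1\ldots i_p}_{j_1\ldots j_q|n|m}$, which by \eqref{eq:RicciTypeIdSymm} produces precisely the $R^i_{\alpha mn}$ and $R^\alpha_{j_lmn}$ contributions that sit inside $\overset1{\mathcal R}$ and $\overset2{\mathcal R}$.

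Next I would gather group (ii): for each upper index $i_k$, the $c_{v_1}L^{i_k}_{\underset\vee{\alpha m}}a^{\cdots}_{\cdots|n}$ coming from $(v_1,w_1)$ combines with $-c_{w_2}L^{i_k}_{\underset\vee{\alpha m}}a^{\cdots}_{\cdots|n}$ produced by the swapped copy (since after $m\leftrightarrow n$ the coefficient $c_{w_2}$ lands on the $m$-slot). This yields the coefficient $(c_{v_1}-c_{w_2})$ in $\overset1{\mathcal S}$; analogous bookkeeping handles the $c_{w_1}-c_{v_2}$, $d_{v_1}-d_{w_2}$, $d_{w_1}-d_{v_2}$ coefficients, and the torsion term $(d_{w_1}+d_{w_2})L^\alpha_{\underset\vee{mn}}a^{i_1\ldots i_p}_{j_1\ldots j_q|\alpha}$ arises because $L^\alpha_{\underset\vee{mn}}$ changes sign under $m\leftrightarrow n$. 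Group (iv) is handled identically to the type-$(1,1)$ computation of the First Ricci-Type Identities Theorem, just with a free index $i_k$ or $j_l$ summed over; the resulting coefficients in front of $L^\beta_{\underset\vee{\alpha m}}L^{i_k}_{\underset\vee{\beta n}}$, etc., are exactly the bracketed expressions in $\overset1{\mathcal R}$, $\overset2{\mathcal R}$ and the $\overset3{\mathcal R}$ cross term. Finally, the quadratic double sums of group (iii) do not simplify under the swap (they mix the several upper, resp.\ lower, indices), so they are lumped together as the residual $\mathcal Z^i_{jmn}$.

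The main obstacle is purely combinatorial: one must keep track of which of the upper indices $i_1,\ldots,i_p$ or lower indices $j_1,\ldots,j_q$ each $L_{\underset\vee{\cdot\cdot}}$ factor acts on, and verify that after $m\leftrightarrow n$ the coefficients recombine to $(c_{v_1}-c_{w_2})$, $(c_{v_1}c_{w_1}-c_{v_2}(c_{w_2}+d_{w_2}))$ and so on, \emph{uniformly} in $k,l$. Once the $(1,1)$-case is treated carefully (which is already done in the Second Ricci-Type Identities Theorem), the general $(p,q)$-case reduces to iterating these coefficient identifications over the index slots, so no new structural difficulty appears beyond the notational overhead.
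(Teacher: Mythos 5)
Your proposal is correct and follows essentially the same route the paper intends: the theorem is obtained by writing \eqref{eq:apqvw} once with $(v,w)=(v_1,w_1)$ and once with $(v,w)=(v_2,w_2)$ and $m\leftrightarrow n$, subtracting, invoking \eqref{eq:RicciTypeIdSymm} for the $|m|n-|n|m$ part, and collecting coefficients term by term (the paper states the result with \qed{} and leaves exactly this bookkeeping implicit). Your grouping of the terms and the resulting coefficients $(c_{v_1}-c_{w_2})$, $c_{v_1}c_{w_1}-c_{v_2}(c_{w_2}+d_{w_2})$, etc., all check out against $\overset1{\mathcal S}$, $\overset2{\mathcal S}$, $\overset1{\mathcal R}$, $\overset2{\mathcal R}$, $\overset3{\mathcal R}$ and the residual $\mathcal Z$.
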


\section{Conclusion}

In this article, we generalized the First Ricci-Type Identities
Theorem. It was proved that three of geometrical objects
$a^i_{j|k}$, $a^i_{j\underset1|k}$, $a^i_{j\underset2|k}$,
$a^i_{j\underset3|k}$, $a^i_{j\underset4|k}$ are linearly
independent here.

After that, we generalized the First Ricci-Type Identities Theorem
with respect to a tensor $\hat a$ of the type $(p,q)$,
$p,q\in\mathbb N$.

In the future work, we will generalize the Commutation Formulae
Theorem, $n_1-n_2-n_3$-Second Ricci-Type Identities Theorem and the
$n_1-n_2$-Second Ricci-Type Identities Theorem with respect to
tensors of the types $(p,q)$, $(p,0)$, $(0,q)$.


\section{Acknowledgements}

This research is financially supported by Serbian Ministry of
Education, Science and Technological Developments.

The authors thank to the anonymous referee who estimated this paper.


\begin{thebibliography}{999999}

\bibitem{eisNRG} {L. P. Eisenhart}, \emph{Non-Riemannian
    Geometry}, New York, 1927.

          \bibitem{mik5} {J. Mike\v s, E. Stepanova,
    A. Van\v zurova, et al.}: \emph{Differential geometry of special mappings},
    Olomouc: Palacky University, 2015.

    \bibitem{mincic2'} {S. M. Min\v ci\'c}, \emph{Ricci identities in
    the space of non-symmetric affine connexion}, Mat. Vesnik, 10
    (25) sv. 2, (1973), 161--172. 

        \bibitem{mincic4} {S. M. Min\v ci\'c}: \emph{Curvature tensors of the space of non-symmetric affine connexion,
    obtained from the curvature pseudotensors}, Matemati\v cki Vesnik,
    13 (28) (1976), 421--435.

    \bibitem{mincic3} {S. M. Min\v ci\'c}, \emph{New commutation
    formulas in the non-symmetric affine connexion space}, Publ.
    Inst. Math., Nouv. S\'er. 22 (1977) 189--199. 

    \bibitem{mincic2} {S. M. Min\v ci\'c}, \emph{Independent
    curvature tensors and pseudotensors of spaces with
    non-symmetric affine connexion}, Coll. Math. Soc. J\'anos Bolayai,
    31. Dif. geom., Budapest (Hungary), (1979), 445--460.

    \bibitem{mincicnovi} {S. M. Min\v ci\'c}: \emph{On Ricci
    Type Identities in Manifolds With Non-Symmetric Affine
    Connection}, Publications De L'Institut Math\'ematique, Nouvelle
    s\'erie, tome 94 (108) (2013), 205--217.

    \bibitem{mincvel} {S. M. Min\v ci\'c and Lj. S.
    Velimirovi\'c}: \emph{Spaces With Non-Symmetric Affine
    Connection}, Novi Sad J. Math., Vol. 38, No. 3, 2008, 157--164.

    \bibitem{petrovic2019} {M. Z. Petrovi\'c}, \emph{Generalized
    para-K\"ahler Spaces in Eisenharts Sense Admitting a
    Holomorphically Projective Mapping}, Filomat, Vol. 33, No. 13,
    2019, 4001--4012. 

    \bibitem{petrovicvelimirovic2018} {M. Z. Petrovi\'c, Lj. S.
    Velimirovi\'c}, \emph{Generalized K\"ahler spaces in Eisenhart's
    sense admitting a holomorphically projective mapping}, Mediterr.
    J. Math. (2018) 15:150.

    \bibitem{petrovicvelimirovic2019} {M. Z. Petrovi\'c, Lj. S.
    Velimirovi\'c}, \emph{A new type of generalized para-Kahler
    spaces and holomorphically projective transformations}, Bull.
    Iran. Math. Soc., Vol. 45, No. 4, 2019, 1021--1043. 

    \bibitem{sinjukov} {N. S. Sinyukov}, \emph{Geodesic mappings
    of Riemannian spaces},  (in Russian), "Nauka", Moscow, 1979.

    \bibitem{stankoviczlatanovicvelimirovic2010} {M. S. Stankovi\'c,
    M. Lj. Zlatanovi\'c, Lj. S. Velimirovi\'c}, \emph{Equitorsion
    holomorphically projective mappings of generalized K\"ahlerian
    space of the first kind},
    Czechoslovak Mathematical Journal, Vol. 60, (2010) 635--653.

    \bibitem{jacovder1} N. O. Vesi\'c, \emph{Ricci-Type Identities},
    submitted.

    \bibitem{z4} {M. Lj. Zlatanovi\'c}, \emph{New projective tensors
    for equitorsion geodesic mappings}, Applied Mathematics Letters,
    Vol. 25, No. 5, 2012, 890--897.

\end{thebibliography}
\end{document}